\newfont{\bcb}{msbm10}
\newfont{\matb}{cmbx10}
\newfont{\got}{eufm10}
\newtheorem{theorem}{Theorem}[section]
\newtheorem{lemma}[theorem]{Lemma}
\newtheorem{proposition}[theorem]{Proposition}
\newtheorem{corollary}[theorem]{Corollary}
\newtheorem{claim}[theorem]{Claim}
\theoremstyle{definition}
\newtheorem{example}[theorem]{Example}
\newtheorem{remark}[theorem]{Remark}
\numberwithin{equation}{section}
\begin{document}

\title[Tame topology in Hensel minimal structures]{Tame topology \\ in Hensel minimal structures}

\author[Krzysztof Jan Nowak]{Krzysztof Jan Nowak}


\subjclass[2000]{Primary 03C65, 32B20, 32P05; Secondary 03C98, 12J25, 57N35.}

\keywords{Non-Archimedean geometry, Hensel minimality, tame topology, cell decomposition, quantifier elimination, fiber shrinking, closedness theorem, \L{}ojasiewicz inequalities, definable separation, definable spaces, embedding theorem, definable ultranormality and ultraparacopactness.}

\date{}

\begin{abstract}
We are concerned with topology of Hensel minimal structures on non-trivially valued fields $K$, whose axiomatic theory was introduced in a recent paper by Cluckers--Halupczok--Rideau. We additionally require that every definable subset in the imaginary sort $RV$, binding together the residue field $Kv$ and value group $vK$, be already definable in the plain valued field language. This condition is satisfied by several classical tame structures on Henselian fields, including Henselian fields with analytic structure, V-minimal fields, and polynomially bounded o-minimal structures with a convex subring. In this article, we establish many results concerning definable functions and sets; among others, existence of the limit for definable functions of one variable, a closedness theorem, several non-Archimedean versions of the \L{}ojasiewicz inequalities, an embedding theorem for regular definable spaces, and the definable ultranormality and ultraparacompactness of definable Hausdorff LC-spaces.
\end{abstract}

\maketitle

\section{Introduction}

We are concerned with geometry and topology of Hensel minimal (more precisely, 1-h-minimal) structures on non-trivially valued fields $K$ of equicharacteristic zero, whose axiomatic theory (in an expansion $\mathcal{L}$ of the language of valued fields) was introduced in the recent papers~\cite{C-H-R,C-H-R-2}. From Section~3 on, we shall additionally assume (unless otherwise stated) the following

\vspace{1ex}

{\bf Condition~(*).}
\begin{em}
Every definable subset in the imaginary sort $RV$, binding together the residue field $Kv$ and value group $vK$, be already definable in the plain valued field language $\mathcal{L}_{hen}$ (see Section~2).
\end{em}

\vspace{1ex}

Note that condition~(*) is satisfied by several classical tame structures on Henselian fields; for instance, by the following four natural examples of Hensel minimal structures:

\vspace{1ex}

1) Henselian valued fields in the plain algebraic language of valued fields ($\omega$-h-minimal).

\vspace{1ex}

2) Henselian valued fields with analytic structure (introduced i the papers~\cite{C-Lip-0,C-Lip}, and $\omega$-h-minimal by \cite{C-H-R}, Theorem~6.2.1).

\vspace{1ex}

3) V-minimal fields ($1$-h-minimal, {\em op.cit.}, Theorem~6.4.2).

\vspace{1ex}

4) polynomially bounded o-minimal structures with a convex subring (1-h-minimal, {\em op.cit.}, Theorem~6.3.4).

\begin{remark}\label{rem-1}
Condition~(*) entails (see Section~2) the property (\dag): the residue field $Kv$ and value group $vK$ are orthogonal, and the sets definable in the sort $vK$ are already definable in the language of ordered groups, even after adding to the language $\mathcal{L}$ an angular component map (if it exists). Note that property~(\dag) is fundamental in our approach. Therefore all the results of our paper hold also for 1-h-minimal structures with this property whenever $\mathcal{L}$ includes an angular component map. An example are almost real closed fields with real analytic structure, which are $\omega$-h-minimal and enjoy property~(\dag) in a language with an angular component map (cf.~\cite{NSV}, Theorems~1.0.1 and~1.0.2).
\end{remark}

In our geometric approach, most essential is which (not how) sets are definable.
The words 0-definable and $A$-definable will mean $\mathcal{L}$-definable and $\mathcal{L}_{A}$-definable; "definable" will refer to definable in $\mathcal{L}$ with arbitrary parameters. Observe that usually the $A$-definable variants of assertions follow immediately from their 0-definable versions because adding constants preserves Hensel minimality (cf.~\cite[Section~2]{C-H-R}).

\vspace{1ex}

In this article, we establish many topological and geometric results concerning definable functions and sets such as, for instance, existence of the limit for definable functions of one variable, a closedness theorem and several non-Archimedean versions of the \L{}ojasiewicz inequalities. In the algebraic case of Henselian fields (also with analytic structure), those results were achieved in our previous papers~\cite{Now-Sel,Now-Sym,Now-Sing,Now-Alant,Now-resonances}.

\vspace{1ex}

We shall also provide an embedding theorem for non-Archimedean regular definable spaces (whose o-minimal and semi-algebraic versions go back to van den Dries~\cite{Dries-2} and Robson~\cite{Rob}),  introduce so called definable LC-spaces, and prove the definable ultranormality and ultraparacompactness of definable Hausdorff LC-spaces and, a fortiori, of definable manifolds.

\vspace{1ex}

This article is organized as follows. In Section~2, we provide some basic model-theoretic terminology and facts (including an algebraic language $\mathcal{L}_{rv}$ for the leading term structure $RV$) concerning valued fields. And next, following the paper~\cite{C-H-R}, some results from Hensel minimality needed in our approach will be recalled.

\vspace{1ex}

From Section~3 on, we shall always assume, unless otherwise stated, that the ground field $K$ of equicharacteristic zero is a model of a 1-h-minimal (complete) $\mathcal{L}$-theory $T$ in an expansion $\mathcal{L}$ of the valued field language $\mathcal{L}_{vf}$ (see Section~2 for details), which satisfies the foregoing condition~(*).

\vspace{1ex}

In Section~3, we prove existence of the limit for definable functions of one variable. (For algebraic versions see~\cite[Section~5]{Now-Sel} and~\cite[Section~5]{Now-Sing}.) We adopt the following notation: $\overline{E}$ and $\partial E := \overline{E} \setminus E$ shall denote the topological closure and frontier of a set $E$, respectively.

\begin{theorem}\label{limit-th1}
Consider a 1-h-minimal field $K$ as indicated above,
a 0-definable function $f:E \to K$ on a subset $E$ of $K$, and the set
$$ W := \partial \,(\mathrm{graph} \, (f)) \cap (\{ 0 \} \times
   \mathbb{P}^{1}(K)) \subset \{ 0 \} \times \mathbb{P}^{1}(K), $$
which is finite by dimension theory in 1-h-minimal structures. Suppose that $0$ is an accumulation point of $E$. Then $W$ is a non-empty set, say
$$ W = \{ w_{1}, \ldots, w_{s} \} \subset \mathbb{P}^{1}(K) = K \cup \{ \infty \}, \ \ s \geq 1, $$
and there exists a partition
$$ E = E_{1} \cup \ldots \cup E_{s} $$
into $s$ disjoint $W$-definable sets (a fortiori, definable over the algebraic closure $\mathrm{acl}\, (\emptyset)$ of\/ $\emptyset$) such that $0$ is their accumulation point and
$$ \lim_{x \rightarrow 0}\, f|E_{i}\, (x) = w_{i}, \ \ i=1,\ldots,s. $$
Moreover, there exists a further $W$-definable partition
$$ E = F_{1} \cup \ldots \cup F_{t}, \ \ t \geq s, $$
finer than the initial one, with the following property. If $F_{j} \subset E_{i}$ and $w_{i} \neq \infty$, then the set
$$ \{ (v(x), v(f(x) - w_{i})): \; x \in F_{j} \setminus \{0 \} \} \subset vK \times (vK \cup \{ \infty \}), \ \ i=1,\ldots,t, $$
is contained either in an affine line with rational slope
$$ \{ (k,l) \in vK \times vK: \ \, q \cdot l = p \cdot k + \gamma \, \} $$
with $p,q \in \mathbb{Z}$, $p,q>0$, $\gamma \in vK$, or in\/
$vK \times \{ \infty \}$.
\end{theorem}

The proof relies on condition (*), domain and range preparation (\cite[Proposition~2.8.6]{C-H-R}) and the fact that every function definable in an ordered abelian group is piecewise linear (\cite{C-H}).

\vspace{1ex}

In Section~4, we prove the following closedness theorem. (For algebraic versions see~\cite[Section~7]{Now-Sel} and~\cite[Section~8]{Now-Sing}.) It has numerous applications in geometry of Henselian fields, allowing us, in particular, to apply resolution of singularities in much the same way as over the locally compact fields. Let us mention that the closedness theorem was inspired by the joint paper~\cite{K-N} with J.~Koll{\'a}r.

\begin{theorem}\label{clo-th}
Given a definable subset $D$ of $K^{n}$, the canonical
projection
$$ \pi: D \times \mathcal{O}_{K}^{m} \longrightarrow D  $$
is definably closed in the $K$-topology, i.e.\ if $A \subset D
\times \mathcal{O}_{K}^{m}$ is a closed 0-definable
subset, so is its image $\pi(A) \subset D$.
\end{theorem}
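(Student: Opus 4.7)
The plan is to deduce the closedness theorem from curve selection (Theorem~\ref{select}) combined with the existence of the limit (Theorem~\ref{limit-th1}), in the style of the strategy used in \cite{Now-Sing,Now-Alant}. The key point is that the second factor of the projection is $\mathcal{O}_{K}^{m}$, so any limit of a curve staying in this valuation ring cannot escape to infinity; the existence of the limit then yields a genuine limit point in $\mathcal{O}_{K}^{m}$.

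Fix a closed definable $A \subset D \times \mathcal{O}_{K}^{m}$ and take $b \in D$ in the closure of $\pi(A)$; one must produce $c \in \mathcal{O}_{K}^{m}$ with $(b,c) \in A$. Translating, we may assume $b=0$. First, apply curve selection to $\pi(A)$ and the point $0$ to obtain a continuous definable curve $\gamma : E \to D$, where $E \subset K$ has $0$ as an accumulation point, with $\gamma(E\setminus\{0\}) \subset \pi(A)$ and $\gamma(t) \to 0$ as $t \to 0$.

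Next, lift $\gamma$ to a curve in $A$. For each $t \in E \setminus \{0\}$ the fiber
$$ A_{\gamma(t)} \;=\; \{ \, y \in \mathcal{O}_{K}^{m} : (\gamma(t), y) \in A \, \} $$
is nonempty. Invoking definable Skolem functions, which are available in our Hensel minimal setting through cell decomposition (and where the hypothesis that only the algebraic language $\mathcal{L}_{rv}$ is induced on $RV$ is essential), one selects a definable function $y : E \setminus \{0\} \to \mathcal{O}_{K}^{m}$ with $(\gamma(t), y(t)) \in A$. Then apply Theorem~\ref{limit-th1} iteratively to each coordinate $y_{i} : E \setminus \{0\} \to K$, shrinking $E$ at each step to a definable subset $F$ still admitting $0$ as an accumulation point, so that each $y_{i}|F$ converges to some $c_{i} \in \mathbb{P}^{1}(K)$. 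Since $y_{i}(t) \in \mathcal{O}_{K}$, i.e.\ $v(y_{i}(t)) \geq 0$ on $F$, the option $c_{i}=\infty$, which would require $v(y_{i}(t)) \to -\infty$, is ruled out. Hence $c = (c_{1},\ldots,c_{m}) \in \mathcal{O}_{K}^{m}$ and $y|F \to c$.

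To conclude, $(\gamma(t),y(t)) \in A$ for $t \in F \setminus \{0\}$ and $(\gamma(t),y(t)) \to (0,c)$; closedness of $A$ forces $(0,c) \in A$, whence $0 = \pi(0,c) \in \pi(A)$. The main obstacle is the definable lifting step: everything else is a direct reduction to Theorems~\ref{select} and~\ref{limit-th1}, but producing a definable section of $\pi|A$ over $\gamma$ rests on having definable choice (or at least a Skolem-type selector) within Hensel minimal structures, which is precisely where the restriction on the induced language on $RV$ plays its role.
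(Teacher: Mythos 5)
Your overall strategy --- reduce to a definable curve in the base via Theorem~\ref{select}, lift it into $A$, and then use Theorem~\ref{limit-th1} together with $v(y_{i})\geq 0$ to obtain a finite limit in $\mathcal{O}_{K}^{m}$ --- matches the architecture of the paper's proof (which likewise uses curve selection to reduce to $n=1$ and the bound $v\geq 0$ to keep the limit in the valuation ring). The gap is exactly where you flag it: the lifting step. Definable Skolem functions are \emph{not} available in this setting. In equicharacteristic zero with infinite residue field, a generic ball $\{x: rv(x)=\xi\}$ admits no point definable from its code, so already the family of fibers of a cell of type $(1,1)$ over its base has no definable section; and the hypothesis that $\mathcal{L}$ induces only the algebraic language $\mathcal{L}_{rv}$ on $RV$ works \emph{against} you here rather than for you, since expanding the language on $RV$ is what one would do to force $\mathrm{acl}=\mathrm{dcl}$, and even that only yields choice from finite sets, not from balls. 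Cell decomposition does not produce a selector either: it produces a \emph{center} $c(x_{<i})$ around which the fiber is an $rv$-preimage, and the center need not lie in the fiber.

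The paper circumvents this by never asking for a section of $\pi|_{A}$ over the curve. After reducing (via Lemma~\ref{family}, the orthogonality of residue field and value group, and reparametrized cell decomposition) to the case where $A$ is a single cell with center $c_{2}(x_{1})$ and $rv$-constraint $\{\eta\}\times P$, it invokes Lemma~\ref{line-2} to find an affine semi-line $L$ in the value group along which $P$ accumulates at infinity, and then writes down an explicit curve $t\mapsto \bigl(w_{1}t^{r_{1}},\, c_{2}(w_{1}t^{r_{1}})+w_{2}t^{r_{2}}\bigr)$ contained in $A$; the center function plus the monomial correction $w_{2}t^{r_{2}}$ is the substitute for the definable choice your argument is missing. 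If you want to keep your outline, you must replace the appeal to definable Skolem functions by this cell-with-center construction (or an equivalent device); as written, the proof does not go through.
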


The proof of this theorem will rely, as in our previous papers~\cite{Now-Sel,Now-Sing}, on existence of the limit for definable functions of one variable, parametrized cell decomposition, condition~(*), and a concept of fiber shrinking (being a weaker version of curve selection). In our earlier approach, fiber shrinking had been achieved via existence of a good semi-line with rational slope for a definable subset in the value group sort. However, J.P.\ Acosta pointed out to us (personal communication) that if the valuation is of infinite rank, such a good semi-line may not exist (see Example~\ref{ex-Acosta}). Therefore, in this paper, we obtain fiber shrinking in a much easier way using instead a natural finite partition.

\vspace{1ex}

\begin{remark}~\label{rem-angular}
The notions of limit, continuity, closedness etc.\ are first order properties. Therefore we can prove the above theorem by passage to elementary extensions and arbitrary parameters. One may thus assume that the Henselian field $K$ under study is $\aleph_{1}$-saturated and, consequently, that an angular component map $\overline{ac}$ (also called a coefficient map, after van den Dries~\cite{Dries}) exists. In this paper, we shall sometimes make use of this fact. Note that adding an angular component map preserves h-minimality. This follows from the resplendency property (\cite[Theorem~4.1.19]{C-H-R}) that $RV$-expansions preserve Hensel minimality, and the fact that adding to the language an angular component map $\overline{ac}$ is equivalent to that of a section $\theta$ of the exact sequence~\ref{exact} with $\overline{ac} = \theta \circ rv$ (cf.~Remark~\ref{rem-orthogonal}).
\end{remark}


\begin{remark}
Also observe that Theorem~\ref{clo-th} may be no longer true after expansion of the language for the leading term structure $RV$, as demonstrated in Example~\ref{ex-2}.
\end{remark}


Section~5 is devoted to several applications including, among others, piecewise and uniform continuity, several non-Archimedean versions of the \L{}ojasiewicz inequalities and H\"{o}lder continuity.
(For algebraic versions see~\cite[Section~9]{Now-Sel} and~\cite[Section~11]{Now-Sing}.)

\vspace{1ex}

Section~6 contains some results on separation of definable sets in an affine space. They are collected in a separate section to expose the methods underlying their proofs, which are similar to those behind the \L{}ojasiewicz inequalities. We prove, in particular, that every definable locally closed subset $X$ of $K^n$ is definably ultranormal (Theorem~\ref{afin-ultranormal}).

\vspace{1ex}

In Section~7, we study non-Archimedean definable spaces and definable LC-spaces, i.e.\ spaces obtained by gluing finitely many definable and definable locally closed subsets of affine spaces $K^n$, respectively. We provide, among others, an embedding theorem for regular definable spaces (Theorem~\ref{embed}), being a non-Archimedean analogue of the one from~\cite[Chapter~10]{Dries-2}. Essential tools applied here are the closedness theorem and separation of definable sets.

\vspace{1ex}

Let us finally comment that soon after o-minimality had become a fundamental concept in real geometry (realizing the postulates of both tame topology and tame model theory), numerous attempts were made to find similar approaches in geometry of valued fields. This has led to various, axiomatically based notions such as C-minimality~\cite{Has-Macph-1,Macph}, P-minimality~\cite{Has-Macph-2}, V-minimality~\cite{H-K}, b-minimality~\cite{C-L}, tame structures~\cite{C-Com-L,C-Fo-L}, and eventually Hensel minimality~\cite{C-H-R}.
Several variants of Hensel minimality are in fact introduced, abbreviated by $l$-h-minimality with $l \in \mathbb{N} \cup \{ \omega \}$. The $l$-h-minimality condition is the stronger, the larger the number $l$ is.

\vspace{1ex}

In the equicharacteristic case, already 1-h-minimality provides, likewise o-minimality does, powerful geometric tools as, for instance, cell decomposition, a good dimension theory or the Jacobian property (an analogue of the o-minimal monotonicity theorem). Actually, the majority of the results from~\cite{C-H-R}, including those applied in our paper, hold for 1-h-minimal theories.

\vspace{1ex}

\section{Valuation- and model-theoretical preliminaries.}
We begin with basic notions from valuation theory. By $(K,v)$ we mean a field $K$ endowed with a valuation $v$. Let
$$ vK, \ \mathcal{O}_{K}, \ \mathcal{M}_{K} \ \ \text{and} \ \ Kv $$
denote the value group, valuation ring, its maximal ideal and residue field, respectively. Let $r: \mathcal{O}_{K} \to Kv$ be the residue map.
In this paper, we shall consider the equicharacteristic zero case, i.e.\ the characteristic of the fields $K$ and $Kv$ are assumed to be zero. For elements $a \in K$, the value is denoted by $va$ and the residue by $av$ or $r(a)$ when $a \in \mathcal{O}_{K}$. Then
$$ \mathcal{O}_{K} = \{ a \in K: \; v(a) \geq 0 \}, \ \ \mathcal{M}_{K} = \{ a \in K: \; v(a) > 0 \}. $$
For a ring $R$, let $R^{\times}$ stand for the multiplicative group of units of $R$. Obviously, $1+\mathcal{M}_{K}$ is a subgroup of the multiplicative group $K^{\times}$. Let
$$ rv: K^{\times} \to G(K) := K^{\times}/(1+\mathcal{M}_{K}) $$
be the canonical group epimorphism. Since $vK \cong K^{\times}/\mathcal{O}_{K}^{\times}$, we get the canonical group epimorphism $\bar{v}: G(K) \to vK$ and the following exact sequence
\begin{equation}\label{exact}
1 \to (Kv)^{\times} \to G(K) \to vK \to 0.
\end{equation}
We put $v(0) = \infty$ and $\bar{v}(0) = \infty$. For simplicity, we shall write
$$ v(a) = (v(a_{1}),\ldots,v(a_{n})) \ \ \text{or} \ \ rv(a) = (rv(a_{1}),\ldots,rv(a_{n})) $$
for an $n$-tuple $a = (a_{1},\ldots,a_{n}) \in K^{n}$.

\vspace{1ex}

Notice now that the following 2-sorted plain valued field language $\mathcal{L}_{hen}$ (with imaginary auxiliary sort $RV$) on Henselian fields $(K,v)$ of equicharacteristic zero, goes back to Basarab~\cite{Bas} and yields (even resplendent) quantifier elimination of valued field quantifiers for the theory of Henselian fields.

\vspace{1ex}

{\em Main sort:} a valued field with the language of rings $(K,0,1,+,-,\cdot)$ or the valued field language $\mathcal{L}_{vf}$ with signature $(K,0,1,+,-,\cdot,\mathcal{O}_{K})$.

\vspace{1ex}

{\em Auxiliary sort:}  $RV(K) := G(K) \cup \{ 0 \}$ with the language specified as follows: (multiplicative) language of groups $(1, \cdot)$ and one unary predicate $\mathcal{P}$ such that
$\mathcal{P}_{K}(\xi) \ \Leftrightarrow \ \bar{v}(\xi) \geq 0$; here we put $\xi \cdot 0 = 0$ for all $\xi \in RV(K)$. The predicate
$$ \mathcal{R}(\xi) \ \ \Longleftrightarrow \ \  [\xi=0 \ \vee \  (\xi \neq 0 \wedge \mathcal{P}(\xi) \wedge \mathcal{P}(1/\xi))] $$
will be construed as the residue field $Kv = Kv$ with the language of rings $(0,1,+,\cdot)$; obviously,
$\mathcal{R}_{K}(\xi) \ \Leftrightarrow \ \bar{v}(\xi) =0$. The sort $RV$ binds together the residue field and value group.

\vspace{1ex}

{\em One connecting map:}  $rv: K \to RV(K), \ \ rv(0) = 0.$

\vspace{1ex}

The valuation ring can be defined by putting $\mathcal{O}_{K} = rv^{-1}(\mathcal{P}_{K})$.
The residue map $r: \mathcal{O}_{K} \to Kv$ will be identified with the map
$$ r(x) = \left\{ \begin{array}{cl}
                        rv(x) & \mbox{ if } \ x \in \mathcal{O}_{K}^{\times}, \\
                        0 & \mbox{ if } \ x \in \mathcal{M}_{K}.
                        \end{array}
               \right.
$$


\begin{remark}\label{rem-plus}
Addition in the residue field $\mathcal{R}_{K} \cup \{ 0 \}$ is the restriction of the following algebraic operation on $RV(K)$:
$$ rv(x) + rv(y) = \left\{ \begin{array}{cl}
                        rv(x+y) & \mbox{ if } \ v(x+y) = \min \{ v(x), v(y) \}, \\
                        0 & \mbox{ otherwise }
                        \end{array}
               \right.
$$
for all $x,y \in K^{\times}$; clearly, we put $\xi + 0 = \xi$ for every $\xi \in RV(K)$.
\end{remark}


\begin{remark}\label{rem-language}
The standard language for the sort $RV$, whose vocabulary has just been introduced, is of course equivalent to the language of rings $(0, 1, +, \cdot)$ from Remark~\ref{rem-plus}. In particular, $\bar{v}(\xi) > 0 \ \Leftrightarrow \ 1 + \xi = 1$.
This language of rings for $RV$ will be denoted by $\mathcal{L}_{rv}$.
\end{remark}


It is well known that exact sequence~\ref{exact} splits whenever the residue field $Kv$ is $\aleph_{1}$-saturated. Then there is a section $\theta: G(K) \to (Kv)^{\times}$ of the monomorphism $\iota: (Kv)^{\times} \to G(K)$, and the map
$$ (\theta,\bar{v}): G(K) \to (Kv)^{\times} \times vK $$
is an isomorphism. Both the sides will be identified by means of this isomorphism. Generally, the existence of such a section $\theta$ is equivalent to that of an angular component map $\overline{ac} = \theta \circ rv$.

\begin{remark}\label{rem-orthogonal}
It is easy to check that the language $\mathcal{L}_{rv}$ with the section $\theta$ is equivalent to the language which consists of two connecting maps
$$ \theta: RV(K) \to Kv, \ \theta(0) =0, \ \ \text{and} \ \  \bar{v}: RV(K) \to vK \cup \{ \infty \}, \ \bar{v}(0) = \infty, $$
of the language of rings $(0,1,+,-,\cdot)$ on the residue field $Kv$, and of the language of ordered groups $(0,+,-,<)$ on the value group $vK$.
\end{remark}

In view of the above remark, the residue field is orthogonal to the value group, i.e.\ every definable subset $C \subset (Kv)^{p} \times (vK)^{q}$ is a finite union of Cartesian products
\begin{equation}\label{orthogonal}
C = \bigcup_{i=1}^{k} \ X_{i} \times Y_{i}
\end{equation}
for some definable subsets $X_{i} \subset (Kv)^{p}$ and $Y_{i} \subset (vK)^{q}$. Observe that the subset $\Lambda$ of $RV(K)^{2}$ from Example~\ref{ex-2} cannot be defined in the language $\mathcal{L}_{rv}$.


\vspace{1ex}

We shall fix a language $\mathcal{L}$ which is an expansion of the language $\mathcal{L}_{vf}$ of valued fields, possibly with some auxiliary imaginary sorts. Consider a model $K$ of a 1-h-minimal (complete) $\mathcal{L}$-theory $T$. For the reader's convenience, we recall below the following two results of Hensel minimality from the paper~\cite{C-H-R}, which are crucial for our approach:

\vspace{1ex}

1) Domain and range preparation ({\em op.cit.}, Proposition~2.8.6), which relies on the valuative Jacobian property ({\em op.cit.}, Lemma.~2.8.5);

\vspace{1ex}

2) Reparameterized cell decomposition ({\em op.cit.}, Theorem~5.7.3 ff.).





\vspace{2ex}

For convenience, we remind the reader that a ball $B$ is said to be 1-next to an element $c \in K$ (see~\cite[Section~2]{C-H-R}) if
$$ B = \{ x \in K: \ rv(x-c)= \xi \} $$
for some $\xi \in RV(K)$, $\xi \neq 0$. A ball $B$ shall be called 1-next to a finite non-empty set $C \subset K$ if
$$ B = \bigcap_{c \in C} \, B_{c} $$
with $B_{c}$ a ball 1-next to $c \in C$ for each $c \in C$; then $B$ is 1-next to some element $c \in C$.

\begin{proposition}\label{dom-range} (Domain and Range Preparation).
Let $f : K \to K$ be a 0-definable function and let $C_0 \subset K$ be a finite, 0-definable set. Then there exist
finite, 0-definable sets $C,D \subset K$ with $C_0 \subset C$ such that $f(C) \subset D$ and for every ball $B$ 1-next to $C$, the image $f(B)$ is either a singleton in $D$ or a ball 1-next to $D$; moreover, the conclusions (1) and (2) of the Valuative Jacobian Property hold.   \hspace*{\fill} $\Box$
\end{proposition}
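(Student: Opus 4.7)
My plan is to derive Proposition~\ref{dom-range} from Proposition~\ref{VJP} by a two-stage preparation, first on the domain and then on the range, exploiting the fact that $1$-h-minimality ensures that any $0$-definable, $RV$-indexed family of subsets of $K$ can be prepared by a finite $0$-definable subset of $K$.

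First I would apply Proposition~\ref{VJP} to $f$ to obtain a finite $0$-definable set $C_{1}\subset K$ such that on each ball $B$ 1-next to $C_{1}$, either $f|_{B}$ is constant or the conclusions (1) and (2) of the Valuative Jacobian Property hold with some $\mu_{B}\in vK$; in particular, in the non-constant case $f(B)$ is an open ball of radius $\mu_{B}+\mathrm{rad}(B)$. I then set $C := C_{0}\cup C_{1}$. Since each ball 1-next to $C$ is contained in a unique ball 1-next to $C_{1}$, and both conclusions of VJP pass to sub-balls, the dichotomy persists on every ball $B$ 1-next to $C$, with the same $\mu_{B}$ inherited from the enclosing ball 1-next to $C_{1}$.

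Next I consider the $0$-definable family
\[
 \mathcal{F} \;=\; \bigl\{\, f(B) : B \text{ is a ball 1-next to } C \,\bigr\},
\]
whose members are either points of $K$ (when $f|_{B}$ is constant) or open balls in $K$ (when the VJP alternative holds); this family is canonically parameterised by the $RV$-tuple labelling $B$, hence is a $0$-definable $RV$-indexed family of subsets of $K$. By $1$-h-minimality of $T$, applied to $\mathcal{F}$ together with the finite set $f(C)$, one obtains a finite $0$-definable set $D\subset K$ containing $f(C)$ such that every member of $\mathcal{F}$ is either a singleton lying in $D$ or a ball 1-next to $D$. The pair $(C,D)$ then fulfils all required conclusions: $C_{0}\subset C$, $f(C)\subset D$, and the VJP clauses (1) and (2) on each $B$ 1-next to $C$ are retained by construction.

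The main obstacle is the invocation of $1$-h-minimality in the last step: one must ensure that the preparation of $\mathcal{F}$ simultaneously places every constant value $f|_{B}\equiv c$ (as $B$ ranges over constant-type balls 1-next to $C$) \emph{into} $D$, while cutting the non-constant images exactly as balls 1-next to $D$. This is precisely the content of the preparation principle under $1$-h-minimality for $0$-definable $RV$-indexed families of subsets of $K$: a single finite $0$-definable $D$ partitions $K\setminus D$ into balls 1-next to $D$ while absorbing those $0$-definable points distinguished by the family data. Granting this, the rest of the argument is formal.
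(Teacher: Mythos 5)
The paper does not actually prove Proposition~\ref{dom-range}: it is recalled from \cite{C-H-R} (Proposition~2.4.6 there), with only the remark that it can be derived from the Valuative Jacobian Property. Your proposal is an attempt at that derivation, and your overall strategy (prepare the domain via Proposition~\ref{VJP}, then prepare the range via the $1$-h-minimality preparation axiom applied to the $RV$-indexed family of images) is indeed the route the paper gestures at. The first stage is sound: with $C=C_{0}\cup C_{1}$, every ball $1$-next to $C$ sits inside a ball $1$-next to $C_{1}$, and conclusions (1) and (2) of Proposition~\ref{VJP} are inherited by sub-balls.

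The gap is in the last step, precisely the point you flag as ``the main obstacle'' and then dismiss. What the preparation axiom (even in its uniform, family version) gives is a finite $0$-definable $D$ such that each $f(B)$ is \emph{$1$-prepared} by $D$, i.e.\ every ball $1$-next to $D$ is either contained in $f(B)$ or disjoint from it. For an open ball $f(B)$ this only says that $f(B)$ is a union of balls $1$-next to $D$ together with possibly some points of $D$; it does not say that $f(B)$ \emph{is} a single ball $1$-next to $D$ (for instance, $\mathcal{M}_{K}$ is $1$-prepared by $\{0\}$ but is not $1$-next to $\{0\}$). Two further ingredients are needed and are missing from your write-up. First, the set of constant values of $f$ on the constant-type balls is the image of a definable map from a subset of $RV^{|C|}$ into $K$; its finiteness is not part of the preparation axiom but is the separate ``finite image'' lemma of \cite{C-H-R} (definable functions from $RV^{k}$ to $K$ have finite range). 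Second, since balls $1$-next to $D$ are exactly the maximal balls disjoint from $D$, an open ball that is $1$-prepared by $D$ \emph{and disjoint from} $D$ is automatically a single ball $1$-next to $D$; hence one must in addition arrange that $f(B)\cap D=\emptyset$ for every non-constant ball $B$. This is the delicate point: the obvious fix of enlarging $C$ so as to $1$-prepare $f^{-1}(D)$ shrinks the balls $B$ that are $1$-next to $C$, and therefore, by clause (1) of Proposition~\ref{VJP}, shrinks the open balls $f(B)$, which can destroy their maximality among balls disjoint from $D$ and thus the property of being $1$-next to $D$. As written, your argument does not address this circularity, so the crucial step remains unproved.
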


\vspace{1ex}

For $m \leq n$, denote by $\pi_{\leq m}$ or $\pi_{< m+1}$ the projection $K^{n} \to K^{m}$ onto the first $m$ coordinates; put $x_{\leq m} = \pi_{\leq m}(x)$. Let $C \subset K^{n}$ be a non-empty 0-definable set, $j_{i} \in \{ 0, 1 \}$ and
$$ c_{i} : \pi_{<i}(C) \to K $$
be 0-definable functions $i=1,\ldots,n$. Then $C$ is called a 0-definable cell with center tuple $c = (c_{i})_{i=1}^{n}$ and of cell-type $j =(j_{i})_{i=1}^{n}$ if it is of the form:
$$ C = \left\{ x \in K^{n}: (rv(x_{i} - c_{i}(x_{<i})))_{i=1}^{n} \in R \right\}, $$
for a (necessarily 0-definable) set
$$ R \subset \prod_{i=1}^{n} \, j_{i} \cdot G(K), $$
where $0 \cdot G(K) = {0} \subset RV(K)$ and $1 \cdot G(K) = G(K) \subset RV(K)$. One can similarly define $A$-definable cells.

\vspace{1ex}

In the absence of the condition that algebraic closure and definable closure coincide in $T = \mathrm{Th}\, (K)$ (i.e.\ the algebraic closure $\mathrm{acl}\,(A)$ equals the definable closure $\mathrm{dcl}\,(A)$ for any Henselian field $K' \equiv K$ and every $A \subset K'$), a concept of parameterized cells must come into play. Let us mention that one can ensure the above condition just via an expansion of the language for the sort $RV$ (cf.~\cite[Proposition~4.3.3]{C-H-R}).

\vspace{1ex}

Consider a 0-definable function $\sigma: C \to RV(K)^{k}$. Then $(C,\sigma)$ is called a 0-definable parameterized (by $\sigma$) cell if each set $\sigma^{-1}(\xi)$,  $\xi \in \sigma(C)$, is a $\xi$-definable cell with some center tuple $c_{\xi}$ depending definably on $\xi$ and of cell-type independent of $\xi$.

\vspace{1ex}

\begin{remark}\label{rem-rv}
If the language $\mathcal{L}$ has an angular component map, then one can take $\sigma$ in the above definition to be residue field valued
(instead of $RV$-valued).
\end{remark}

Below we recall a fundamental result on parametrized cell decomposition from~\cite{C-H-R} (Theorem~5.7.3 along with Addenda 1, 2, and 4).

\begin{theorem}\label{cell} (Parameterized Compatible Cell Decomposition)
For every 0-definable sets
$$ X \subset K^{n} \ \ \text{and} \ \ P \subset X \times RV(K)^{t}, $$
there exists a finite decomposition of $X$ into 0-definable parametrized cells $(C_{k},\sigma_{k})$ with continuous centers such that the fibers of $P$ over each twisted box of each $C_{k}$ are constant or, equivalently, the fiber of $P$ over each $\xi \in RV(K)^{t}$ is a union of some twisted boxes from the cells $C_{k}$.
Moreover, given finitely many 0-definable functions $f_{j}: X \to K$, one can require that the restriction of every function $f_{j}$ to each cell $\sigma_{k}^{-1}(\xi)$ be continuous. \ \ \hspace*{\fill} $\Box$
\end{theorem}



\vspace{1ex}

\section{Existence of the limit}
In this section, we prove Theorem~\ref{limit-th1} on existence of the limit for definable functions of one variable. It is easy to reduce the problem to the case where the function $f$ is bounded. Observe that it suffices to show that the set $W \subset K$ is non-empty. Indeed, put
$$ E_{i} := f^{-1}(\{ x \in E: \ v(x-w_{i}) > \rho \}), \ \ i=1,\ldots,s, $$
where
$$ \rho := \max \, \{ v(w_{i}-w_{j}): \ i,j =1,\ldots,s, \ i \neq j \}, $$
and
$$ E_{0} := E \setminus (E_{1} \cup \ldots \cup E_{s}). $$
Then
$$ W_{0} := \partial \,(\mathrm{graph} \, (f|E_{0}) \cap (\{ 0 \} \times K) = \emptyset . $$
Hence and by the assertion applied to the restriction $f|E_{0}$, we see that $0$ is not an accumulation point of the set $E_{0}$. Then
$$ E_{1} \cup E_{0}, E_{2}, \ldots, E_{s} $$
is a partition we are looking for.

\vspace{1ex}

We are going to prove that the set $W$ is non-empty.
By Remarks~\ref{rem-angular} and~\ref{rem-orthogonal} ff., we can pass to elementary extensions and assume that the field $K$ has a coefficient map $\overline{ac}$, exact sequence~\ref{exact} splits, and the residue field and value group are orthogonal. Then we have the isomorphism
$$ (\theta,\bar{v}): G(K) \to (Kv)^{\times} \times vK, $$
and thus we can identify $G(K)$ with $(Kv)^{\times} \times vK$. This isomorphism is of significance because topological properties of the valued field $K$ are described in terms of the value group $vK$.

\vspace{1ex}

We may of course assume that $0 \not \in E$, $E$ is the union of some balls 1-next to a point from a 0-definable finite set $C_{0} \subset K$ with $0 \in C_{0}$ (by parametrized cell decomposition), and that the function $f$ is bounded, with image contained in the open unit ball centered at the origin, $f(E) \subset \mathcal{O}_{K}$. Extend the function $f$ by putting
$$ f(x) =1 \ \ \text{for all} \ \ x \in K \setminus E. $$
By Proposition~\ref{dom-range}, there exist finite 0-definable subsets $C \subset K$ with $C_{0} \subset C$  and $D \subset K$ such that for every ball $B$ 1-next to $C$, the image $f(B)$ is either a singleton in $D$ or a ball 1-next to $D$. Set
$$ \gamma_{0} := \max \, \{ v(c): \ c \in C, \ c \neq 0 \}. $$
Without loss of generality, we can assume that $E$ is contained in the open ball of radius $\gamma_{0}$ centered at the origin, and consequently that $E$ is the union of some balls 1-next to $0$ and to $C$ too.

Therefore we can partition the domain $E$ into a finite number of $\mathrm{acl}\, (\emptyset)$-definable pieces $E_{1},\ldots,E_{s}$ with accumulation point $0$ and points $w_{1},\ldots,w_{s} \in D$ such that if a ball $B \subset E_{i}$ is 1-next to $0$, then the image $f(B)$ is either $\{ d_{i} \}$ or a ball 1-next to $d_{i}$. It suffices to show that
$$ \lim_{x \rightarrow 0}\, f|E_{i}\, (x) = w_{i}, \ \ i=1,\ldots,s. $$
So fix an $i \in \{ 1,\ldots,s \}$; the problem easily reduces to the case $d_{i}=0$. The case $f(B) = \{ d_{i} \}$ is clear. So consider the other one. Obviously, the balls 1-next to $0$ are of the form $\{ rv(x) =\xi \}$ for a unique $\xi \in G(K)$.
Consider the $\mathrm{acl}\, (\emptyset)$-definable set $X \subset G(K)^{2}$ defined by the formula
$$ \left\{ (\xi,\eta) \in G(K)^{2}: \ \{ rv(x) =\xi \} \subset E_{i}, \ f(\{ rv(x) =\xi \}) =  \{ rv(y) =\eta \} \right\} . $$
By the orthogonality property, $X$ is defined by a finite disjunction of conjunctions of the form:
\begin{equation}\label{disjunction}
  \phi(\theta(\xi),\theta(\eta)) \ \wedge \ \psi(\bar{v}(\xi),\bar{v}(\eta)).
\end{equation}
We can assume, without loss of generality, that $X$ is defined by one from those conjunctions and is of the form:
$$ \theta(\eta) = \alpha (\theta(\xi)) \ \wedge \ \bar{v}(\eta) = \beta (\bar{v}(\xi)) $$
for some $\mathrm{acl}\, (\emptyset)$-definable functions $\alpha$ and $\beta$ with domains $\Theta \subset Kv$ and $\Delta \subset vK$, respectively; obviously, the domain $\Delta$ is with accumulation point $\infty$.

\vspace{1ex}

Now we apply the following theorem from~\cite[Corollary~1.10]{C-H} to the effect that functions definable in ordered abelian groups are piecewise linear.

\vspace{1ex}

\begin{proposition}~\label{prop-linear}
Consider an ordered abelian group $G$ with the language of ordered groups $\mathcal{L}_{oag} = ( 0,+, < )$.
Let
$$ \beta :G^{n} \to G $$
be an $A$-definable function for a subset $A \subset G$. Then there exists a partition of $G^{n}$ into finitely many $A$-definable subsets
such that, on each subset $S$ of them, $\beta$ is linear; more precisely, there exist
$$ r_{1},\ldots, r_{n}, s \in \mathbb{Z}, \ \ s \neq 0, $$
and an element $\gamma$ from the definable closure of $A$ such that
$$ \beta(a_{1},\ldots,a_{n}) = \frac{1}{s} \cdot (r_{1} a_{1} + \ldots + a_{n} r_{n} + \gamma) $$
for all $(a_{1},\ldots,a_{n}) \in S$.   \hspace*{\fill} $\Box$
\end{proposition}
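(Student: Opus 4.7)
My plan is to deduce Proposition~\ref{prop-linear} from quantifier elimination for ordered abelian groups, which is the substance of the cited result in \cite{C-H}. First I would expand $\mathcal{L}_{oag}$ by adjoining, for each positive integer $s$, a unary predicate $P_s$ interpreted as ``$s \mid x$''. This expansion is definable, so the class of $A$-definable sets is unchanged. The main quantifier elimination theorem for ordered abelian groups, in the spirit of Gurevich--Schmitt and made precise by Cluckers--Halupczok, then asserts that in this expanded language every $A$-definable subset of $G^{n+1}$ is a finite Boolean combination of atomic formulas of the form
$$ r_0 x_0 + r_1 x_1 + \cdots + r_n x_n \ \leq \ \gamma \quad \text{or} \quad P_s(r_0 x_0 + \cdots + r_n x_n - \gamma), $$
with $r_i \in \mathbb{Z}$, $s \in \mathbb{N}$, and $\gamma$ from the definable closure of $A$.

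Next I would apply this to the graph $\Gamma_f \subset G^{n+1}$, viewing $x_0 = b = f(a)$ and $(x_1,\ldots,x_n)=(a_1,\ldots,a_n)$. I partition $G^n$ into finitely many $A$-definable pieces on which the truth value of every atomic formula not involving $b$ is constant; on each such piece $S$, the condition ``$(a,b) \in \Gamma_f$'' reduces to a fixed Boolean combination of atomic conditions in $b$ (with the $a_i$ appearing as parameters inside the linear forms). Since $f$ is a function, this Boolean combination must pick out exactly one $b$ for every $a \in S$.

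The heart of the matter is then a uniqueness analysis inside an ordered abelian group. Inequalities alone bound $b$ to an interval, and congruences alone have infinitely many solutions, so uniqueness of $b$ forces two opposite inequalities with a common leading coefficient $r_0 \neq 0$ to collapse to an equality
$$ r_0 b \ = \ r_1 a_1 + \cdots + r_n a_n + \gamma, $$
the auxiliary congruences merely carving out the piece $S$ on which the right-hand side is $r_0$-divisible in $G$. Setting $s = |r_0|$ and adjusting signs yields the claimed form of $f$ on $S$. The main obstacle is exactly this uniqueness analysis, including the careful handling of the interaction between $\leq$ and $P_s$; once it is in place, everything else is bookkeeping from the QE machinery.
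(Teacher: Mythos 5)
The paper offers no proof of this proposition at all: it is quoted directly as Corollary~1.10 of Cluckers--Halupczok \cite{C-H}, so the only honest comparison is with that paper's argument, not with anything in the present text. Your attempted reproof has a genuine gap at its very first step. The quantifier elimination you invoke --- that every $A$-definable subset of $G^{n+1}$ is a Boolean combination of conditions $r_0x_0+\cdots+r_nx_n\leq\gamma$ and $P_s(r_0x_0+\cdots+r_nx_n-\gamma)$ with $\gamma\in\mathrm{dcl}(A)$ --- is the Presburger-style elimination. It holds for $\mathbb{Z}$-groups and, more generally, for regular ordered abelian groups, but it is \emph{false} for arbitrary ordered abelian groups; that failure is exactly why Gurevich--Schmitt, and then Cluckers--Halupczok, had to introduce the auxiliary ``spine'' sorts. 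In general one must also admit atomic conditions referring to a definably parametrized family of definable convex subgroups $G_\alpha$ (congruences modulo $G_\alpha+sG$, order comparisons modulo $G_\alpha$, conditions on archimedean classes), and definable sets are not Boolean combinations of your two kinds of atoms. Once these extra predicates are present, the ``uniqueness analysis'' no longer reduces to two opposite inequalities collapsing to an equality: the value $b=f(a)$ may a priori be pinned down only up to a coset of some $G_\alpha+sG$ together with an order condition modulo $G_\alpha$, and showing that functionality still forces an honest equality $sb=\sum r_ia_i+\gamma$ piecewise is precisely the content of the work in \cite{C-H} that your sketch treats as bookkeeping.

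Even in the restricted setting where your QE statement is correct (e.g.\ $G=\mathbb{Z}$), the step you yourself call ``the heart of the matter'' is left undone, and it is not quite as you describe: in a discrete group the two inequalities bounding $r_0b$ need not collapse to an equality but rather determine $b$ as the least solution of a congruence above a linear bound, so that $r_0b$ equals the linear form plus a bounded integer offset depending on the residue of that form --- one must partition $S$ further by those residues before an equality appears. You gesture at this via ``the auxiliary congruences merely carving out the piece $S$,'' but the argument needs to be written. If the aim is only to justify the proposition as the paper uses it, the correct move is the paper's own: cite \cite[Corollary~1.10]{C-H} rather than rederive it from a form of quantifier elimination that general ordered abelian groups do not satisfy.
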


\vspace{1ex}

Hence, for $\bar{v}(\xi) \in \Delta$, we obtain the equivalence
$$ \bar{v}(\eta) = \beta (\bar{v}(\xi))  \ \  \Longleftrightarrow \ \
   \bar{v}(\eta) =  \frac{1}{s} \cdot (r \cdot \bar{v}(\xi) + \gamma). $$
Then the set
$$ F := \{ a \in K: \ \overline{ac}(a) \in \Theta, \ v(a) \in \Delta \} $$
is an $\mathrm{acl}\, (\emptyset)$-definable subset of $E$ with accumulation point $0$.
We thus encounter three cases:

\vspace{1ex}

{\em Case 1.} If $r/s > 0$, then
$\lim_{x \rightarrow 0} \, f|F (x) = 0$, and we are done.

\vspace{1ex}

{\em Case 2.} If $r/s < 0$, then
$\lim_{x \rightarrow 0} \, f|F (x) = \infty$, which is impossible since the function $f$ is assumed to be bounded.

\vspace{1ex}

{\em Case 3.} Were $r/s = 0$, then the function $\beta \equiv \gamma/s$ would be constant. Then for any $\theta \in \Theta$ and $\delta \in \Delta$, we would get
$$ f( \{ x: \ \overline{ac}(x) = \theta, \ v(x) = \delta \}) = \{ y: \ \overline{ac}(x) = \alpha(\theta), \ v(y) = \gamma/s \}. $$
Fix a $\lambda \in \alpha(\Theta)$.
Then, for any every point $b \in K$ from the ball
$$ \{ y \in K: \ \overline{ac}(y) =\lambda, \ v(y) = \gamma/s \}, $$
the set $(f|F)^{-1}(b)$ would be infinite, which is impossible. This contradiction shows that Case 3 cannot happen either, and the first conclusion of the theorem follows.

\vspace{1ex}

The second conclusion about a finer $W$-definable partition can be easily obtained by further  finer partitioning of the set $E$ with respect to not only the finite number of disjunctions~\ref{disjunction}, but also application of Proposition~\ref{prop-linear} on piecewise linearity of definable functions. This completes the proof.   \hspace*{\fill} $\Box$

\vspace{2ex}





\section{Proof of the closedness theorem}

As in Section~3, we can pass to elementary extensions and assume that the field $K$ has a coefficient map $\overline{ac}$, exact sequence~\ref{exact} splits, and the residue field and value group are orthogonal.
We begin by stating a lemma on some definable sets in the value group sort.

\begin{lemma}\label{group-sort}
Consider a descending definable family $\Lambda_{\rho} \subset vK$, $\rho \in vK$, of non-empty sets. Suppose that this family is bounded from below and above, say by $-\alpha$ and $\alpha$ for some $\alpha \in vK$, $\alpha >0$. Then it is ultimately constant.
\end{lemma}

\begin{proof}
This lemma requires more care for groups of infinite rank.
It follows directly from relative quantifier elimination for ordered abelian groups from~\cite{C-H} which asserts that every definable subset $\Lambda \subset (vK)^{2}$ is given by a formula in family union form
\begin{equation}\label{family-form}
\bigvee_{i=1}^{k} \, \exists \, \bar{\theta} \ [\xi_{i}(\bar{\theta}) \wedge \psi_{i}(x,y,\bar{\theta})],
\end{equation}
where $\bar{\theta}$ are variables from the auxiliary sorts, $\xi_{i}(\bar{\theta})$ live purely in the auxiliary sorts, and each $\psi_{i}(x,y,\bar{\theta})$ is a conjunction of literals.

Consider the subset
$$ \Lambda := \bigcup_{\rho \in vK} \, \{ \rho \} \times \Lambda_{\rho}. $$
Then $\Lambda = \bigcup_{i=1}^{k} \, \Lambda_{i}$, where $\Lambda_{i}$ is the subset defined by the $i$-th formula of the disjunction~\ref{family-form}, $i=1,\ldots,k$.
We may assume, without loss of generality, that for each $i \in \{ 1,\ldots,k \}$ the set
$$ \{ \rho \in vK: \ \emptyset \neq \Lambda_{i,\rho} := \{ y \in vK: \ (\rho,y) \in \Lambda_{i} \} \} $$
is unbounded from above.

Recall that atomic formulas occurring in the $\psi_{i}(x,y,\bar{\theta})$ are built from predicates for the relations
$$ x_{1} \diamond_{\omega} x_{2} + k_{\omega}, \ \ \diamond \in \{ =,<,\equiv_{m} \}, \ k \in \mathbb{Z}, \ m \in \mathbb{N}, $$
with $\omega$ from the auxiliary sorts, and from the predicates
$$ x_{1} \equiv^{[m']}_{m,\omega} x_{2}, \ \ m,m' \in \mathbb{N}, \ \omega \in \mathcal{S}_{p}, \ p \in \mathbb{P}. $$
In the above predicates one substitutes for $x_{1}, x_{2}$ some terms which are linear functions with integer coefficients in the variables $x,y$.
Therefore, since the subset $\Lambda$ is bounded from below and above, it is not difficult to deduce the following property:

\vspace{1ex}

\begin{em}
Let $\Lambda_{i}'$ be the subset defined by the conjunction of the non-congruence literals occurring in $\psi_{i}(x,y,\bar{\theta})$, thus built from the predicates $=_{\alpha}$ and $<_{\alpha}$. If $\rho > \mathbb{N} \alpha$ and $-\alpha \leq \sigma \leq \alpha$, then whether those non-congruence literals are satisfied by $(\rho, \sigma)$ depends only on $\rho$ and on literals with terms which are linear functions of only the variable $y$. Hence, for each $i=1,\ldots,k$, the non-empty fibers $\Lambda_{i,\rho}'$, $\rho \in vK$, are constant for $\rho > \mathbb{N} \alpha$.
\end{em}

\vspace{1ex}

Clearly, there is a positive integer $M$ such that every literal from the $\psi_{i}(x,y,\bar{\theta})$, $i=1,\ldots,k$, which is built from the congruence predicates $\equiv_{m,\omega}$ and $\equiv^{[m']}_{m,\omega}$, $m,m' \in \mathbb{N}$, is independent of shifts of variables
$$ x \mapsto x + M \gamma, \ \ y \mapsto y + M \delta, $$
for every $\gamma,\delta \in vK$; this means that every such literal is satisfied by $(\alpha,\beta) \in (vK)^{2}$ if and only if it is satisfied by $(\alpha+ M \gamma,\beta + M \delta) \in (vK)^{2}$.

Therefore, taking into account that the family $\Lambda_{\rho}$ is descending and the congruence relations involved in the definition of $\Lambda$ are independent of the shifts of the variable $x$, we can conclude that the initial family $\Lambda_{\rho}$ is ultimately constant. This finishes the proof.
\end{proof}

As in our previous papers~\cite{Now-Sel,Now-Sing}, the proof of the closedness theorem will make use of fiber shrinking, achieved earlier via existence of a good semi-line with rational slope for a definable subset in the value group sort. When the value group is of infinite rank, such a semi-line may not exist, as indicated in the following example by J.P.~Acosta.

\begin{example}\label{ex-Acosta}
Consider the set $I := \mathbb{N} \times \mathbb{N}$ with the lexicographic order and the abelian group $G := \bigoplus_{I} \, \mathbb{Z}$ with order induced by that on $I$ and $\mathbb{Z}$. Denote by $I_{1}$ (respectively $I_{0}$) the sets of elements from $I$ that have (respectively, do not have) predecessors in $I$; and by $P_{1}$ (respectively, $P_{0}$) the sets of elements from $G$ whose dominant term corresponds to an element of $I_{1}$ (respectively, of $I_{0}$). Then for any affine function
$$ f(x) = rx + \tau, \ \ r \in \mathbb{N} \setminus \{ 0 \}, \ \tau \in G, $$
we have
$$ f(P_{1} \cap G_{>\alpha}) \subset P_{1} \ \ \text{and} \ \ f(P_{0} \cap G_{>\alpha}) \subset P_{0} $$
for some $\alpha \in G$, where $G_{>\alpha} := \{x \in G: \ x > \alpha \}$. Therefore there is no semi-line
$$ L = \{ (r_{1}\tau + \gamma_{1}, r_{2}\tau + \gamma_{2}): \, \tau
   \in G, \ \tau \geq 0 \} $$
with $r_{1},r_{2} \in \mathbb{N} \setminus \{ 0 \}$, $\gamma_{1},\gamma_{2} \in G$, and such that $(\infty,\infty)$ is an accumulation point of the intersection $(P_{0} \times P_{1}) \cap L$.
\end{example}

But actually the proof of fiber shrinking from~\cite[Proposition~6.1]{Now-Sel} can be simplified by replacing the argument with a good semi-line $L$  with a finite partition in the value group sort, as described below.

\vspace{1ex}

Consider a 0-definable subset $A \subset K^{n}$ with accumulation point $0 \in K^{n}$. Put
$$ P := \{ (v(x_{1}),\ldots,v(x_{n})) \in (vK)^{n}: \ (x_{1},\ldots,x_{n}) \in A \}, $$
$$ \Theta_{i} := \{ \gamma \in (vK)_{\geq 0}^{n}: \ \forall \, j=1,\ldots,n \ \gamma_{j} \geq \gamma_{i} \}, $$
and
$$ \widetilde{\Theta_{i}} := \{ x \in K^{n}: \ (v(x_{1}),\ldots,v(x_{n})) \in \Theta_{i} \}. $$
Then $(vK)_{\geq 0}^{n} = \bigcup_{i}^{n} \, \Theta_{i}$ and $(\infty,\ldots,\infty)$ is an accumulation point of one of the sets $P \cap \Theta_{i}$, $i=1,\ldots,n$; say of $P \cap \Theta_{1}$. Therefore
$$ \Phi := A \cap \widetilde{\Theta_{1}} $$
is a 0-definable $x_{1}$-fiber shrinking for the set A at $0$.

\vspace{1ex}

Now we can turn to the {\em proof of the closedness theorem}.
As observed before (\cite[Section~7]{Now-Sel}), fiber shrinking makes it possible to reduce the proof to the case $m=n=1$, which will now be considered.

\vspace{1ex}

We must thus show that if $A$ is a 0-definable subset of $D \times \mathcal{O}$ and a point $b=0 \in K$ lies in the
closure of the projection $B := \pi_{1}(A)$, then there is a point $a \in A$ such that $b = \pi_{1}(a) = 0$.
We still need the following

\begin{lemma}\label{family}
Consider a 0-definable family $X_{\xi}$, $\xi \in (Kv)^{k}$, of subsets of $K^{n}$ and a point $a \in K^{n}$. Then $a$ lies in the closure of the union $\bigcup_{\xi} \, X_{\xi}$ iff $a$ lies in the closure of $X_{\xi_{0}}$ for some $\xi_{0}$.
\end{lemma}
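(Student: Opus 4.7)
The ``if'' direction is trivial, so my plan focuses on showing that if $a \in \overline{\bigcup_{\xi} X_{\xi}}$, then already $a \in \overline{X_{\xi_{0}}}$ for some single $\xi_{0}$. The guiding idea is that the residue field parameter $\xi$ and the ``depth'' parameter $\gamma \in vK$ controlling the shrinking neighbourhood basis of $a$ live in orthogonal sorts, so the set of witnesses stabilises.

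Concretely, I would first reformulate closure in terms of closed balls: $a \in \overline{A}$ iff the ball
$$ B(a,\gamma) := \{ x \in K^{n} : v(x_{i}-a_{i}) \geq \gamma, \; i=1,\ldots,n \} $$
meets $A$ for every $\gamma \in vK$. Then, following Remark~\ref{rem-angular}, I would pass to an $\aleph_{1}$-saturated elementary extension so that the angular component map is available and the orthogonality decomposition~\ref{orthogonal} applies. For each $\gamma \in vK$ I introduce the set of ``witnesses at depth $\gamma$''
$$ \Xi_{\gamma} := \bigl\{ \xi \in (Kv)^{k} : X_{\xi} \cap B(a,\gamma) \neq \emptyset \bigr\} \subset (Kv)^{k}. $$
By hypothesis each $\Xi_{\gamma}$ is nonempty, and the inclusion $\Xi_{\gamma} \supseteq \Xi_{\gamma'}$ for $\gamma \leq \gamma'$ is immediate.

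The key step is to apply orthogonality to the definable subset $\{(\xi,\gamma) : \xi \in \Xi_{\gamma}\} \subset (Kv)^{k} \times vK$, writing it as a finite disjoint union $\bigcup_{i=1}^{m} X_{i} \times Y_{i}$. It follows that $\Xi_{\gamma} = \bigcup_{i:\, \gamma \in Y_{i}} X_{i}$ takes only finitely many possible values as $\gamma$ varies. Combined with monotonicity, the decreasing family $\{\Xi_{\gamma}\}_{\gamma}$ must stabilise on a final segment of $vK$: there exist $\gamma_{0} \in vK$ and a nonempty set $\Xi_{\infty} \subseteq (Kv)^{k}$ with $\Xi_{\gamma} = \Xi_{\infty}$ for all $\gamma \geq \gamma_{0}$. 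Picking any $\xi_{0} \in \Xi_{\infty}$, the fibre $X_{\xi_{0}}$ meets every ball $B(a,\gamma)$ with $\gamma \geq \gamma_{0}$ (and trivially those with $\gamma < \gamma_{0}$), whence $a \in \overline{X_{\xi_{0}}}$, as required.

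I do not anticipate a serious obstacle; the argument is short once one spots that orthogonality is the right input. The only point that requires a bit of care is justifying the use of an angular component map (which underlies the clean form of orthogonality), but this is handled by the standard reduction to a saturated extension noted in Remark~\ref{rem-angular}, since ``lying in the closure of'' is expressible by a first-order scheme over $\gamma \in vK$.
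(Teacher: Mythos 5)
Your argument is correct and is essentially the paper's own proof: the paper likewise applies the orthogonality of the residue field and value group, to the set $\bigcup_{\xi}\{\xi\}\times v(X_{\xi}-a)\subset (Kv)^{k}\times(vK)^{n}$, of which your witness set $\{(\xi,\gamma):\xi\in\Xi_{\gamma}\}$ is just a definable reformulation. Your additional bookkeeping (monotonicity and stabilisation of the finitely many values $\Xi_{\gamma}$) simply makes explicit what the paper leaves to the reader.
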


\begin{proof}
Apply the orthogonality property (for the residue field $Kv$ and value group $vK$) to the set
$$ \bigcup_{\xi \in (Kv)^{k}} \, \{ \xi \} \times v(X_{\xi} - a). $$
\end{proof}

Hence and by decomposition into cells with residue field valued parametrization, we are reduced to the case where $A$ is the closure of a $\xi$-definable cell $C_{\xi}$ of a type $(1,j_{2})$ for some $\xi \in (Kv)^{k}$:
$$ C_{\xi} = \left\{ x \in K^{2}: \ (rv(x_{1} -c_{1}), rv(x_{2} - c_{2}(x_{1}))) \in R \right\} $$
with a continuous center $c$ and a $\xi$-definable set
$$ R \subset \prod_{i=1}^{2} \, j_{i} \cdot G(K), \ \ j_{i} \in \{ 0,1 \}. $$
The case $j_{2}=0$ is obvious by virtue of Theorem~\ref{limit-th1}.

\vspace{1ex}

Now consider the case $j_{2}=1$. If $c_{1} \neq 0$, then $0 \in B =  \pi_{<2}(C_{\xi})$ and the theorem follows. Suppose $c_{1}=0$. By Theorem~\ref{limit-th1}, we can assume that the center $c_{2}(x_{1})$ extends to a continuous function at $c_{1}=0$, denoted by the same letter for simplicity. Then we can assume the center $c_{2}(x_{1})$ vanishes.

\vspace{1ex}

If the point $(0,0)$ lies in the closure of $A = C_{\xi}$, we are done.

\vspace{1ex}

Otherwise there is an $\epsilon \in vK$, $\epsilon  \geq 0$, such that
$$ [\, (x_{1},x_{2}) \in A \ \wedge \ v(x_{1}) > \epsilon \, ] \ \Longrightarrow \ v(x_{2}) \leq \epsilon. $$
Then every ball in $C_{\xi}$ lying over the points $x_{1} \in B$ with $v(x_{1}) > \epsilon$ is of radius $\leq \epsilon$. 
Then
$$ C_{\xi} = \left\{ x \in K^{2}: \ (rv(x_{1}), rv(x_{2})) \in R \right\} $$
for a subset $R$ of $G(K) \times G(K)$ such that the set
$$ \bar{v}(R) := \{ (\bar{v}(\eta_{1}), \bar{v}(\eta_{2})): \ (\eta_{1},\eta_{2}) \in R \} \subset vK \times (vK)_{\geq 0} $$
is bounded.
Again, by the orthogonality property, $R$ is a finite union of Cartesian products
$$ R = \bigcup_{i=1}^{k} \ X_{i} \times Y_{i} $$
for some non-empty definable subsets
$$ X_{i} \subset Kv \times Kv \ \ \text{and} \ \ Y_{i} \subset vK \times (vK)_{\geq 0}. $$
Then the definable families $\Lambda_{i,\rho}$, $\rho \in vK$, of the projections onto the second factor of the sets
$$ Y_{i} \cap ((\rho,\infty) \times vK), \ \ i=1,\ldots,k, $$
are descending.

\vspace{1ex}

Since $0$ is an accumulation point of the cell $B$, at least one of them, say $\Lambda_{1,\rho}$, $\rho \in vK$, is a family of non-empty sets. By Lemma~\ref{group-sort}, it is ultimately constant, say equal to $\Lambda_{1}$. Let $\Delta_{1}$ be the projection onto the second factor of the set $X_{1}$. Then, for every point $x_{2} \in K$ such that $rv(x_{2}) \in \Delta_{1} \times \Lambda_{1}$, we get $(0,x_{2}) \in A = \overline{C_{\xi}}$. This completes the proof of Theorem~\ref{clo-th}.    \hspace*{\fill} $\Box$

\vspace{2ex}

We still give an example which demonstrates that the closedness theorem may fail after expansion by predicates of the language for the leading term structure $RV$. Notice that such expansions remain Hensel minimal by virtue of the resplendency of Hensel minimality (cf.~\cite[Theorem~4.1.19]{C-H-R}).


\begin{example}\label{ex-2}
Suppose that the exact sequence~\ref{exact} splits and the value group $vK = \mathbb{Z}$. We thus have a (non-canonical) isomorphism
$$ G(K) \simeq (Kv)^{\times} \times vK $$
(cf.~Remarks~\ref{rem-language} and~\ref{rem-orthogonal}). Consider the language $\mathcal{L}_{rv}^{\prime}$ which is an expansion of the language $\mathcal{L}_{rv}$ with a section $\theta$ by a predicate to identify $vK$ as a subset of $RV$. Next augment the language $\mathcal{L}_{rv}^{\prime}$ by a predicate to name the set
$$ \Lambda := \{ (\lambda,\xi) \in RV(K)^{2}: \ \theta(\lambda)=1, \, \bar{v}(\lambda) \in \mathbb{N}, \, \theta(\xi) \in \mathbb{N}, \, \bar{v}(\xi)=0 \}. $$
Then the set
$$ A := \left\{ (x,y) \in K^{2}: \ rv(x,y) \in \Lambda \right\} $$
is a closed subset of $K^{2}$ definable in the augmented language, but its projection
$$ \pi(A) = \{ x \in K: \ rv(x) = (1,k), \ k \in \mathbb{N} \} $$
is not a closed subset of $K$, having $0 \in K$ as an accumulation point. Observe finally that the set $\Lambda$ is not definable (even with parameters) in the language $\mathcal{L}_{rv}^{\prime}$.
   \hspace*{\fill} $\Box$
\end{example}

\vspace{1ex}

\section{Applications of the closedness theorem}

We begin by proving piecewise continuity.

\begin{proposition}\label{piece}
Let $A \subset K^{n}$ and $f: A \to \mathbb{P}^{1}(K)$ be an
$0$-definable function. Then $f$ is piecewise
continuous, i.e.\ there is a finite partition of $A$ into
$0$-definable locally closed subsets
$A_{1},\ldots,A_{s}$ of $K^{n}$ such that the restriction of $f$
to each $A_{i}$ is continuous.
\end{proposition}

\begin{proof}
Consider the graph
$$ E := \{ (x,f(x)): x \in A \} \subset K^{n} \times \mathbb{P}^{1}(K). $$
We proceed with induction with respect to the dimension
$$ d = \dim A = \dim \, E. $$

\vspace{1ex}

Observe first that every $0$-definable subset $X$ of
$K^{n}$ is a finite disjoint union of locally closed
$0$-definable subsets of $K^{n}$. This can be easily
proven by induction on the dimension of $X$. Therefore we can assume that the graph $E$
is a locally closed subset of $K^{n} \times \mathbb{P}^{1}(K)$ of
dimension $d$ and that the conclusion of the theorem holds for
functions with source and graph of dimension $< d$.

\vspace{1ex}

Let $F := \overline{E}$ be the closure of $E$ in $K^{n} \times \mathbb{P}^{1}(K)$
and $\partial E := F \setminus E$ be its frontier.
Since $E$ is a locally closed subset of $K^{n} \times \mathbb{P}^{1}(K)$, the frontier $\partial E$ is a closed
subset of $K^{n} \times \mathbb{P}^{1}(K)$. Let
$$ \pi: K^{n} \times \mathbb{P}^{1}(K) \longrightarrow K^{n} $$
be the canonical projection. Then, by virtue of the closedness
theorem, the images $\pi(F)$ and $\pi(\partial E)$ are closed
subsets of $K^{n}$. Further,
$$ \dim \, F = \dim \, \pi(F) = d $$
and
$$ \dim \, \pi(\partial E) \leq \dim \, \partial E < d. $$
Putting
$$ B := \pi(F) \setminus \pi(\partial E) \subset \pi(E) = A, $$
we thus get
$$ \dim \, B = d \ \ \text{and} \ \ \dim \, (A \setminus B) < d.
$$
Clearly, the set
$$ E_{0} := E \cap (B \times \mathbb{P}^{1}(K)) = F \cap (B \times
   \mathbb{P}^{1}(K)) $$
is a closed subset of $B \times \mathbb{P}^{1}(K)$ and is the
graph of the restriction
$$ f_{0}: B \longrightarrow \mathbb{P}^{1}(K) $$
of $f$ to $B$. Again, it follows immediately from the closedness
theorem that the restriction
$$ \pi_{0} : E_{0} \longrightarrow B $$
of the projection $\pi$ to $E_{0}$ is a definably closed map.
Therefore $f_{0}$ is a continuous function. But, by the induction
hypothesis, the restriction of $f$ to $A \setminus B$ satisfies
the conclusion of the theorem, whence so does the function $f$.
This completes the proof.
\end{proof}

\begin{remark}
The above proposition can be also achieved by means of~\cite[Theorem~5.1.1]{C-H-R} and dimension theory developed there.
\end{remark}

Yet another direct consequence of the closedness theorem is the following

\begin{proposition}\label{clo-bound}
Let $f:E \to K^m$ be a continuous definable map on a closed bounded subset $E$ of $K^n$. Then the image $f(E)$ is a closed bounded subset of $K^m$ too.
\end{proposition}

\begin{proof}
Consider $f$ as a continuous map into the projective space $\mathbb{P}^m(K)$ and apply the closedness theorem to the graph $F$ of the map $f$:
$$ F := \{ (x,y) \in E \times \mathbb{P}^m(K): \ y=f(x) \}. $$
\end{proof}

Algebraic non-Archimedean versions of the \L{}ojasiewicz
inequalities, established in our papers~\cite{Now-Sel,Now-Sing}, can
be carried over to the general settings considered here with
proofs repeated almost verbatim. Therefore, below they will be only stated without proofs.
These are results~\ref{Loj1}, \ref{Hol}, \ref{UC}, \ref{Loj2} and \ref{Loj3} being Hensel minimal counterparts of the algebraic results 11.2, 11.3, 11.4, 11.5 and 11.6 from our paper~\cite{Now-Sing}, respectively.
The main ingredients of the proofs are the closedness theorem, the orthogonality property and relative
quantifier elimination for ordered abelian groups. They allow us
to reduce the problems under study to that of piecewise linear
geometry. We first state the version, which is closest to the classical one.

\begin{theorem}\label{Loj1}
Let $f,g_{1},\ldots,g_{m}: A \to K$ be continuous
definable functions on a closed bounded subset $A$ of $K^{m}$. If
$$ \{ x \in A: g_{1}(x)= \ldots =g_{m}(x) =0 \} \subset \{ x \in A: f(x)=0 \}, $$
then there exist a positive integer $s$ and a constant $\beta \in vK$ such that
$$ s \cdot v(f(x)) + \beta \geq \min \, \{ v(g_{1}(x)), \ldots ,v(g_{m}(x)) \} $$
for all $x \in A$. Equivalently, in the multiplicative convention, there is a $C \in |K|$ such that
$$ | f(x) |^{s} \leq C \cdot | (g_{1}(x), \ldots , g_{m}(x)) |  $$
for all $x \in A$; here
$$ | (g_{1}(x), \ldots , g_{m}(x)) | := \max \, \{ | g_{1}(x) |, \ldots , |g_{m}(x) | \}. $$   \hspace*{\fill} $\Box$
\end{theorem}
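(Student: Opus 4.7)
The plan is to study the image $T := \Phi(A) \subset (\Gamma \cup \{\infty\})^2$ of the definable map $\Phi(x) := (v(f(x)), v(g(x)))$ on $A$ (where $g := (g_1, \ldots, g_m)$ and $v(g) := \min_i v(g_i)$), to show it is piecewise linear, and to rule out the pathological configuration by appealing to the closedness theorem. I would pass to an $\aleph_1$-saturated elementary extension to secure an angular component map (cf.~Remark~\ref{rem-angular}).

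The first preparatory fact I would establish is that continuous definable images of closed bounded sets, when they themselves land in some $\mathcal{O}_K^m$, are closed. Concretely: after rescaling so that $A \subset \mathcal{O}_K^n$, restrict to the closed bounded subset $B_0 := \{x \in A : v(g(x)) \geq 0\}$ (which captures all points of $g(A)$ near $0$); then the graph $\{(x,g(x)) : x \in B_0\}$ sits inside $\mathcal{O}_K^m \times \mathcal{O}_K^n$, and the closedness theorem applied to the projection onto the $\mathcal{O}_K^m$-factor (removing the $\mathcal{O}_K^n$-factor) implies that $g(B_0) \subset \mathcal{O}_K^m$ is closed. The second preparatory fact is that, by the orthogonality of the residue field and value group (Remark~\ref{rem-orthogonal}) combined with relative quantifier elimination for ordered abelian groups (Proposition~\ref{prop-linear}), the set $T$ is piecewise linear: a finite union of cells cut out by linear equalities and inequalities in the coordinates $\alpha := v(f)$, $\gamma := v(g)$ with rational coefficients and parameters in $\Gamma$. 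The hypothesis of the theorem translates to: no point of $T$ has $\gamma = \infty$ together with $\alpha$ finite.

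I would then argue by contradiction. Suppose no admissible $(s,\beta)$ exist. Since $T$ is a finite union of linear cells, each with only finitely many asymptotic directions of rational slope, the only obstruction to the inequality is that some cell has a \emph{vertical} asymptotic direction, i.e.~extends to $\gamma = \infty$ with $\alpha$ remaining bounded. This is extracted in the same fashion as Lemma~\ref{line-2}, but with the two coordinates interchanged, and yields some $M \in \Gamma$ such that the closed bounded definable set $B := \{x \in A : v(f(x)) \leq M\}$ carries points on which $v(g)$ is arbitrarily large. In other words, $0$ is an accumulation point of $g(B) \cap \mathcal{O}_K^m$. By the first paragraph's closedness result this intersection is closed, so $0 \in g(B)$; picking $x_0 \in B$ with $g(x_0) = 0$ and invoking the hypothesis forces $f(x_0) = 0$, hence $v(f(x_0)) = \infty > M$, contradicting $x_0 \in B$.

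The main obstacle, I expect, is the piecewise-linear extraction in the third paragraph: pinning down that the failure of the \L{}ojasiewicz inequality is exactly the presence of a vertical asymptotic direction in $T$, as opposed to any other unbounded direction (where the rational slope available across the finitely many cells would furnish a usable integer $s$ and a corresponding $\beta$). This mirrors the reduction to piecewise linear geometry carried out in \cite[Theorem~11.2]{Now-Sing}, and its rigour hinges on the cell structure for ordered abelian groups provided by Proposition~\ref{prop-linear} together with Lemma~\ref{line-2}.
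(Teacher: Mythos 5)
Your proposal is correct and follows essentially the route the paper itself indicates: the paper gives no self-contained proof of Theorem~\ref{Loj1}, stating only that the argument of \cite[Theorem~11.2]{Now-Sing} carries over verbatim, with the closedness theorem, the orthogonality property and relative quantifier elimination for ordered abelian groups as the main ingredients reducing everything to piecewise linear geometry. Your write-up (closedness of definable images of closed bounded sets, piecewise linearity of the value-group image of $(v(f),v(g))$, and the dichotomy between a rational-slope bound and a vertical asymptotic direction killed by the closedness theorem) is exactly that reduction, so no further comparison is needed.
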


A direct consequence of Theorem~\ref{Loj1} is the following result
on H\"{o}lder continuity of definable functions.

\begin{proposition}\label{Hol}
Let $f: A \to K$ be a continuous definable function
on a closed bounded subset $A \subset K^{n}$. Then $f$ is
H\"{o}lder continuous with a positive integer $s$ and a constant
$\beta \in vK$, i.e.\
$$ s \cdot v(f(x) - f(z)) + \beta \geq  v(x-z) $$
for all $x,z \in A$. Equivalently, there is a $C \in |K|$ such
that
$$ | f(x) - f(z) |^{s} \leq C \cdot | x-z | $$
for all $x,z \in A$.   \hspace*{\fill} $\Box$
\end{proposition}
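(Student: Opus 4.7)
The plan is to apply Theorem~\ref{Loj1} to $f(x) - f(z)$ on the product $A \times A$, using the coordinate differences $x_i - z_i$ as the vanishing data. Concretely, set $B := A \times A \subset K^{2n}$, which is again closed and bounded since $A$ is, and define the continuous $\mathcal{L}$-definable functions
$$ F(x,z) := f(x) - f(z), \qquad G_{i}(x,z) := x_{i} - z_{i} \quad (i=1,\ldots,n) $$
on $B$.

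The zero set of $(G_{1},\ldots,G_{n})$ in $B$ is precisely the diagonal $\{(x,x) : x \in A\}$, and $F$ vanishes identically on the diagonal. Hence the hypothesis of Theorem~\ref{Loj1} is satisfied, and we obtain a positive integer $s$ and a constant $\beta \in \Gamma$ such that
$$ s \cdot v(F(x,z)) + \beta \geq v\bigl((G_{1}(x,z),\ldots,G_{n}(x,z))\bigr) $$
for all $(x,z) \in B$. Unpacking the definitions, $v((G_{1}(x,z),\ldots,G_{n}(x,z))) = \min_{i} v(x_{i}-z_{i}) = v(x-z)$ in the convention of the paper, so this inequality is exactly the desired
$$ s \cdot v(f(x) - f(z)) + \beta \geq v(x-z), \qquad x,z \in A. $$
The multiplicative reformulation in terms of $|\cdot|$ follows at once.

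There is essentially no obstacle: the only verifications are that $A \times A$ is a closed bounded $\mathcal{L}$-definable subset of $K^{2n}$ and that $F, G_{i}$ are continuous $\mathcal{L}$-definable functions on it, both of which are immediate. Thus the proposition really is, as advertised, a direct application of Theorem~\ref{Loj1} to the pair $(F; G_{1},\ldots,G_{n})$ on $A \times A$.
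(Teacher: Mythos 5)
Your proof is correct and is exactly the intended derivation: the paper offers no separate argument, merely asserting that the proposition is "a direct consequence of Theorem~\ref{Loj1}," and the doubling-of-variables application to $F(x,z)=f(x)-f(z)$ and $G_i(x,z)=x_i-z_i$ on $A\times A$ is the standard way to realize that. The only cosmetic caveat is that Theorem~\ref{Loj1} as printed uses the same letter $m$ for the number of functions $g_i$ and the ambient dimension, whereas you apply it with $n$ functions on a subset of $K^{2n}$; this is harmless since the theorem holds for any number of functions.
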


We immediately obtain

\begin{corollary}\label{UC}
Every continuous definable function $f: A \to K$ on
a closed bounded subset $A \subset K^{n}$ is uniformly continuous.   \hspace*{\fill} $\Box$
\end{corollary}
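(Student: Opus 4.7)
The corollary is a direct consequence of Proposition~\ref{Hol}, so the plan is essentially to unwind definitions. The Hölder inequality says there exist $s \in \mathbb{Z}_{>0}$ and $\beta \in \Gamma$ such that $s \cdot v(f(x) - f(z)) + \beta \geq v(x-z)$ for all $x,z \in A$. Uniform continuity in the $K$-topology means that for every $\gamma \in \Gamma$ there exists $\delta \in \Gamma$ such that $v(x-z) \geq \delta$ implies $v(f(x) - f(z)) \geq \gamma$ for all $x, z \in A$.

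First I would fix an arbitrary $\gamma \in \Gamma$ and set $\delta := s\gamma + \beta$. Then, for any $x,z \in A$ with $v(x-z) \geq \delta$, I would chain the Hölder inequality with this choice:
$$ s \cdot v(f(x) - f(z)) \geq v(x-z) - \beta \geq s\gamma + \beta - \beta = s\gamma, $$
so $v(f(x) - f(z)) \geq \gamma$, which is exactly the uniform continuity statement. Equivalently, in the multiplicative notation, given any $\epsilon \in |K|$ with $\epsilon > 0$, one can take $\delta := \epsilon^{s}/C$ (with $C$ from Proposition~\ref{Hol}) and conclude that $|x-z| < \delta$ forces $|f(x) - f(z)| < \epsilon$.

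There is no genuine obstacle here: the entire content lies in Proposition~\ref{Hol}, whose proof in turn rests on the closedness theorem (Theorem~\ref{clo-th}) and the \L{}ojasiewicz inequality (Theorem~\ref{Loj1}). The only minor care needed is to note that the constants $s$ and $\beta$ obtained from Proposition~\ref{Hol} are uniform in $x$ and $z$ over all of $A$, which is precisely what makes the continuity uniform rather than merely pointwise.
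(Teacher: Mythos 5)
Your proposal is correct and matches the paper's intent exactly: the paper derives this corollary as an immediate consequence of Proposition~\ref{Hol} without further argument, and your unwinding of the H\"{o}lder inequality (taking $\delta := s\gamma + \beta$, respectively $\delta := \epsilon^{s}/C$ in multiplicative notation) is precisely the routine verification being left implicit. The observation that the uniformity of $s$ and $\beta$ over all of $A$ is what upgrades pointwise to uniform continuity is the right point to emphasize.
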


Now we formulate another, more general version of the
\L{}ojasiewicz inequality for continuous definable functions of a
locally closed subset of $K^{n}$.

\begin{theorem}\label{Loj2}
Let $f,g: A \to K$ be two continuous $0$-definable
functions on a locally closed subset $A$ of $K^{n}$. If
$$ \{ x \in A: g(x)=0 \} \subset \{ x \in A: f(x)=0 \}, $$
then there exist a positive integer $s$ and a continuous
$0$-definable function $h$ on $A$ such that $f^{s}(x) =
h(x) \cdot g(x)$ for all $x \in A$.    \hspace*{\fill} $\Box$
\end{theorem}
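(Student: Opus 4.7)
The plan is to produce $h$ explicitly as the extension of the quotient $f^{s}/g$ by zero across the zero set of $g$, and to choose $s$ large enough (using the compact \L ojasiewicz inequality locally, then a saturation argument globally) to secure continuity. For each $s \in \mathbb{Z}_{>0}$ set
\[
h_{s}(x) := \begin{cases} f(x)^{s}/g(x) & \text{if } g(x) \neq 0, \\ 0 & \text{if } g(x) = 0. \end{cases}
\]
Each $h_{s}$ is $\mathcal{L}$-definable and already satisfies the required identity $f^{s} = h_{s} \cdot g$ on $A$, and is automatically continuous on the open set $\{g \neq 0\}$. Using the hypothesis $\{g=0\} \subset \{f=0\}$, the task reduces to choosing $s$ so that $v(h_{s}(x)) \to \infty$ as $x \to a$ for every $a \in A \cap \{g = 0\}$.

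First I would establish a \emph{pointwise} version near each such $a$. Since $A$ is locally closed, there is a closed ball $B$ centered at $a$ for which $V := A \cap B$ is closed and bounded, so Theorem~\ref{Loj1} applied to $f|V$ and $g|V$ yields $s_{a} \in \mathbb{Z}_{>0}$ and $\beta_{a} \in \Gamma$ with $s_{a} \cdot v(f(x)) + \beta_{a} \geq v(g(x))$ on $V$. Taking $s = s_{a}+1$, for $x \in V$ with $g(x) \neq 0$ one has
\[
v(h_{s}(x)) = (s_{a}+1)\,v(f(x)) - v(g(x)) \geq v(f(x)) - \beta_{a} \longrightarrow \infty
\]
as $x \to a$, by continuity of $f$ and $f(a)=0$. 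Hence $h_{s_{a}+1}$ is continuous at $a$.

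Next I would upgrade to a single exponent $s$ working throughout $A$. The set $T_{s} := \{a \in A : h_{s} \text{ is discontinuous at } a\}$ is $\mathcal{L}$-definable, continuity being a first-order property of the $K$-topology, and the sequence is nested, $T_{s+1} \subseteq T_{s}$: on $\{g \neq 0\}$ one has $h_{s+1} = f \cdot h_{s}$, and at an $a \in \{g=0\}$ where $h_{s}$ is continuous, $h_{s+1}(x) = f(x) h_{s}(x) \to 0 = h_{s+1}(a)$ because $f(a) = 0$. The pointwise step shows $\bigcap_{s \geq 1} T_{s} = \emptyset$. Pass now to an $\aleph_{1}$-saturated elementary extension $K^{*} \succ K$ (cf.\ Remark~\ref{rem-angular}); the pointwise argument applied in $K^{*}$ still gives $\bigcap_{s} T_{s}^{*} = \emptyset$, but were no $T_{s_{0}}^{*}$ empty, the descending chain $(T_{s}^{*})_{s \in \mathbb{N}}$ of nonempty definable sets would have the finite intersection property, and $\aleph_{1}$-saturation would supply a point in its intersection, contradicting what we just showed. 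Hence $T_{s_{0}}^{*} = \emptyset$ for some $s_{0}$, and this first-order fact descends to $T_{s_{0}} = \emptyset$ in $K$ by elementarity.

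The main obstacle, as the structure above already reveals, is the leap from the pointwise local estimate to a uniform global one: Theorem~\ref{Loj1} controls only one closed bounded neighborhood at a time, and the constants $s_{a}$, $\beta_{a}$ might \textit{a priori} blow up as $a$ varies across $A$. The saturation/compactness argument is the cleanest way to pull a single $s$ out of the family of local data; it relies essentially on the first-order definability of the $K$-topology together with the fact that the monotone chain $T_{s} \supseteq T_{s+1}$ is indexed by standard integers, so that $\aleph_{1}$-saturation suffices.
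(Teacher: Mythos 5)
Your proof is correct, but it follows a genuinely different route from the paper's. The paper offers no self-contained argument for Theorem~\ref{Loj2}: it states that the proofs from \cite{Now-Sel,Now-Sing} carry over almost verbatim, the main ingredients being the closedness theorem (Theorem~\ref{clo-th}), the orthogonality of residue field and value group, and relative quantifier elimination for ordered abelian groups, which reduce the problem to piecewise linear geometry in the value group; it also points out that Theorem~\ref{Loj3} is a strengthening of Theorem~\ref{Loj2}, so the division statement is subsumed by the extension-by-zero statement. You instead bootstrap from the closed-bounded case: the local estimate $v(h_{s_{a}+1}(x)) \geq v(f(x)) - \beta_{a}$ on $A \cap B$ is sound (local closedness does give a closed bounded $A \cap B$ for a small enough closed ball $B$, so Theorem~\ref{Loj1} applies), the monotonicity $T_{s+1} \subseteq T_{s}$ of the discontinuity loci is verified correctly using $f(a)=0$ at zeros of $g$, and the $\aleph_{1}$-saturation argument legitimately extracts a single standard exponent because the chain is indexed by standard integers, each $T_{s}$ is definable over the finitely many parameters of $f$, $g$, $A$, and Theorem~\ref{Loj1} still applies in the elementary extension (1-h-minimality and the hypotheses on $f$, $g$, $A$ all transfer, and the exponent it produces is a standard integer). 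What your approach buys is that all the piecewise-linear and closedness-theorem content is encapsulated in the black box of Theorem~\ref{Loj1}, at the price of an extra model-theoretic compactness step; what the paper's cited route buys is a uniform, fully definable argument that yields the stronger Theorem~\ref{Loj3} by the same method.
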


Now put
$$ \mathcal{D}(f) := \{ x \in A: f(x)
   \neq 0 \} \ \ \text{and} \ \ \mathcal{Z}\, (f) := \{ x \in A: f(x) = 0 \}. $$
The following theorem may be also regarded as a kind of the
\L{}ojasiewicz inequality, which is, of course, a strengthening of
Theorem~\ref{Loj2}.

\begin{theorem}\label{Loj3}
Let $f: A \to K$ be a continuous $0$-definable function
on a locally closed subset $A$ of $K^{n}$ and $g: \mathcal{D}(f)
\to K$ a continuous $0$-definable function. Then $f^{s}
\cdot g$ extends, for $s \gg 0$, by zero through the set
$\mathcal{Z}\, (f)$ to a (unique) continuous
$0$-definable function on $A$.    \hspace*{\fill} $\Box$
\end{theorem}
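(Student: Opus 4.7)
The plan is to exhibit the continuous extension as $\tilde F_s:A\to K$ defined by $\tilde F_s:=f^sg$ on $\mathcal{D}(f)$ and $\tilde F_s:=0$ on $\mathcal{Z}(f)$, for $s$ large. Uniqueness is immediate since $\mathcal{D}(f)\sqcup\mathcal{Z}(f)=A$. The set $\mathcal{D}(f)$ is relatively open in $A$ and $\tilde F_s$ is locally constantly zero near any point of $\mathcal{Z}(f)\setminus\overline{\mathcal{D}(f)}$; so continuity of $\tilde F_s$ can fail only at points $x_0\in\mathcal{Z}(f)\cap\overline{\mathcal{D}(f)}$, and the task reduces to showing that for some $s$ independent of $x_0$, $f(x)^sg(x)\to 0$ as $x\to x_0$ through $\mathcal{D}(f)$.

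By curve selection (Theorem~\ref{select}), the above limit holds iff it holds along every continuous definable curve $\alpha:E\to A$ with $\alpha(t)\in\mathcal{D}(f)$ for $t\neq 0$ and $\alpha(t)\to x_0$. Applying Theorem~\ref{limit-th1} successively to the one-variable definable functions $t\mapsto f(\alpha(t))$ and $t\mapsto g(\alpha(t))$, refining $E$ at each step, one obtains affine relations
\[ q_1\cdot v(f(\alpha(t)))=p_1\cdot v(t)+\beta_1,\qquad q_2\cdot v(g(\alpha(t)))=p_2\cdot v(t)+\beta_2, \]
with $p_i,q_i\in\mathbb{Z}$, $q_i>0$ and $\beta_i\in\Gamma$. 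Since $f$ is continuous and $f(\alpha(t))\to f(x_0)=0$, necessarily $p_1>0$, and a short computation shows $v(f^sg\circ\alpha)\to\infty$ precisely when $sp_1q_2+p_2q_1>0$, i.e.\ when $s>-p_2q_1/(p_1q_2)$.

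The heart of the proof is making $s$ uniform across all such curves and all $x_0$. Localising to a bounded neighbourhood $U$ of $x_0$ on which $v(f)\geq 0$, I would apply the reparametrized cell decomposition (Theorem~\ref{cell}) to $\mathcal{D}(f)\cap U$. By orthogonality of the residue field and value group (Remark~\ref{rem-orthogonal}) and piecewise linearity of definable functions on ordered abelian groups (Proposition~\ref{prop-linear}), on each cell the value-group-valued map $x\mapsto(v(f(x)),v(g(x)))$ has a piecewise affine description with only finitely many asymptotic slopes as $v(f)\to\infty$. Choosing $s$ strictly larger than every such slope --- over the finitely many cells --- forces the definable ``bad set'' $B_s:=\{x\in\mathcal{D}(f)\cap U: sv(f(x))+v(g(x))\leq 0\}$ to satisfy $\overline{B_s}\cap\mathcal{Z}(f)=\emptyset$, by the closedness theorem. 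Globalisation from local $U$ to all of $A$ is handled either by an $\aleph_1$-saturation argument (Remark~\ref{rem-angular}) or by one global cell decomposition. With $s$ so chosen, $\tilde F_s$ takes values in $K$ everywhere, its graph is closed in $A\times K$, and continuity follows from one more appeal to the closedness theorem.

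The principal obstacle I foresee is precisely this uniformity step. Extracting a single global exponent from the per-curve slope bound furnished by Theorem~\ref{limit-th1} requires the full combined strength of orthogonality, the piecewise linear structure of value-group definable sets, and the closedness theorem --- exactly the ingredients the author highlights in the remarks preceding Theorem~\ref{Loj1}.
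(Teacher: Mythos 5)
The paper does not actually prove Theorem~\ref{Loj3}: it only records that the proof of Theorem~11.6 of \cite{Now-Sing} carries over verbatim, the main ingredients being the closedness theorem, the orthogonality property and relative quantifier elimination for ordered abelian groups, which reduce the problem to piecewise linear geometry. Your proposal assembles exactly these ingredients in the intended way --- passing to the definable image $\{(v(f(x)),v(g(x)))\}\subset\Gamma^{2}$, extracting finitely many asymptotic slopes via piecewise linearity (note it is this image set, not the map on $K^{n}$, that is piecewise linear), choosing $s$ above all of them, and finishing with the closedness theorem --- so it is essentially the paper's approach, with the uniformity step sketched at roughly the same level of detail as the paper itself provides.
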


Finally notice that the \L{}ojasiewicz inequalities play an important role in geometry of definable sets. Let us mention, for instance, that Theorem~\ref{Loj3} is an essential ingredient of the proof of the Nullstellensatz for regulous (i.e.\ continuous and rational) functions on $K^{n}$ (cf.~\cite[Section~12]{Now-Sel} and~\cite[Section~12]{Now-Sing}).

\vspace{1ex}

\section{Separation of definable sets}

We now prove some results concerning separation of definable sets, which will be applied in the next sections. A definable set $X$ is called  definably ultranormal (in other words, definably zero-dimensional with respect to the large inductive dimension) if, for every two disjoint definable closed subsets $A$ and $B$ of $X$, there exists a definable clopen subset $C$ of $X$ such that $A \subset C$ and $B \subset X \setminus C$.

\begin{theorem}\label{afin-ultranormal}
Every definable locally closed subset $X$ of $K^n$ is definably ultranormal.
\end{theorem}

\begin{proof}
Let $A$ and $B$ be two disjoint closed definable subsets of $X$. For any $\beta \in vK$, $\beta > 0$, put
$$ X_\beta := \{ x \in X: \ v(x) > -\beta, \ \forall \, y \in \partial X \ v(x-y) < \beta \}, $$
$$ A_\beta := A \cap X_\beta, \ \  B_\beta := B \cap X_\beta, $$
and
$$ \Lambda := \{ (\beta, v(x-y)) \in (vK)^2: \ x \in A_\beta, \ y \in B_\beta \}. $$
It is easy to check that $X_\beta$, $A_\beta$ and $B_\beta$ are closed bounded subsets of $K^n$, and that
$$ \bigcup_{\beta >0} \, X_{\beta} = X. $$
It follows from Proposition~\ref{clo-bound} that every set
$$ A_\beta - B_\beta := \{ a-b \in K^n: \ a \in A_\beta, \ b \in B_\beta \} $$
is a closed subset of $K^n$.
Therefore, since
$0 \not \in A_\beta - B_\beta$,
the fibers
$$ \{ \gamma \in vK: (\beta,\gamma) \in \Lambda \} $$
of $\Lambda$ over $\beta$ are bounded, i.e.\ smaller than some $\alpha(\beta) \in vK$.
Now observe that, similarly as in the proofs of the \L{}ojasiewicz inequalities (see \cite[Section~9]{Now-Sel} and~\cite[Section~11]{Now-Sing}), the set $\Lambda$ can be ultimately separated from infinity by a semi-line, which means that
$$ \Lambda \cap \{ (\beta,\gamma) : \beta > \beta_{0} \} \subset \{ (\beta, \gamma) \in (vK)^{2}: \ \gamma < s \cdot \beta \} $$
for a non-negative integer $s$ and some $\beta_{0} \in vK$. Then the set
$$ U := \bigcup_{\beta > \beta_{0}} \  \left( A_\beta + \{ x \in K^n: \ v(x) > s \beta \} \right) $$
is a clopen subset of $K^n$ such that $A \subset U$ and $B \subset K^n \setminus U$, concluding the proof.
\end{proof}

We immediately obtain

\begin{corollary}
The affine space $K^{n}$ is definably ultranormal.
\end{corollary}

\begin{remark}\label{def-separate}
By condition (*) imposed on the auxiliary sort $RV$, and thus on the value group $vK$ too, it is clear that the value $\beta_{0}$ in the above proof can be taken 0-definable whenever the closed subsets $A$ and $B$ are 0-definable. Therefore the subset $U$ is then 0-definable as well.
\end{remark}

In the next section, we shall still need the following generalization of separation of closed definable subsets, whose proof is a straightforward adaptation of the one of the above theorem.

\begin{proposition}\label{separation}
Consider two closed 0-definable subsets $A$ and $B$ of the affine space $K^{n}$. Then there is a closed 0-definable subset $\Omega$ of $K^{n}$ such that
$A \subset \Omega$, $B \setminus A \subset K^{n} \setminus \Omega$, and $\Omega \setminus (A \cap B)$ is a clopen subset of $K^{n} \setminus (A \cap B)$.
     \hspace*{\fill} $\Box$
\end{proposition}

\begin{remark}\label{symmetry}
The situation described in the above proposition is symmetric. Indeed, it is easy to check that the set
$$ U := (K^{n} \setminus \Omega) \cup (A \cap B) $$
is a required clopen subset for the reverse pair of subsets $B$ and $A$.
\end{remark}

\vspace{1ex}

\section{Definable spaces and embedding theorem, definable ultranormality and ultraparacompactness}

In this section we deal with definable spaces $X$, which are defined by gluing finitely many affine definable sets (i.e.\ definable subsets of affine spaces $K^n$). Their theory, developed by van den Dries (cf.~\cite{Dries-2}) in the case of o-minimal structures, carries over to the non-Archimedean settings.
Most natural examples of such spaces are projective spaces, their products and definable subspaces. Obviously, the affine spaces $K^n$ are zero-dimensional with respect to the small inductive dimension; and so are their subspaces since regularity is a hereditary property. Therefore every regular definable space $X$ is zero-dimensional too.

\vspace{1ex}

We shall establish a non-Archimedean version of the embedding theorem for definable spaces in o-minimal structures by L.~van Den Dries~\cite[Chapter~10]{Dries-2}, which goes back to R.~Robson~\cite{Rob} in the semialgebraic case.

\vspace{1ex}

Further, we introduce the concept of {\em definable LC-spaces}, i.e.\ those definable spaces which are defined by gluing finitely many definable, locally closed subsets of affine spaces $K^n$. Such spaces include, in particular, definable manifolds obtained by gluing definable open subsets of an affine space $K^n$.

\vspace{1ex}

We shall prove (Theorem~\ref{ultranormal}) that every definable Hausdorff LC-space $X$ is even definably ultranormal (for the definition see the beginning of Section~6).
This refers in particular to definable manifolds.

\vspace{1ex}

We first give an example of a definable Hausdorf space which is not regular.

\begin{example}
Construct a definable space $X$ by gluing the following two definable subsets of $K^2$ by means of the identity charts:
$$ U_{1} := \left( K^2 \setminus (K \times \{ 0 \}) \right) \cup \{ (0,0) \}, \ \ \ U_{2} := \left( K^2 \setminus (\{ 0 \} \times K) \right). $$
It is not difficult to check that $X$ is a Hausdorff space. Then
$$ A := (K \times \{ 0 \}) \setminus \{ (0,0) \} \subset U_2 $$
is a closed definable subset of $X$, since $A \cap U_1 = \emptyset$ and
$$ A \cap U_2 =  (K \times \{ 0 \}) \cap U_2 $$
is a closed subset of $U_2$. But any neighborhood of $A$ in $U_2$ has $(0,0)$ as an accumulation point. Therefore $A$ and $(0,0)$ cannot be separated by open neighborhhods, and thus $X$ is not a regular definable space.    \hspace*{\fill} $\Box$
\end{example}

\begin{theorem}\label{embed}
Every regular definable space $X$ is affine, i.e.\ $X$ can be embedded into an affine space $K^N$.
\end{theorem}

\begin{proof}
Consider a definable atlas $(\phi_{i}: U_{i} \to V_{i})_{i=1}^{k}$ of $X$ with definable sets $V_{i} \subset K^{n_{i}}$. It suffices to construct a refinement of the covering $(U_{i})_{i=1}^{k}$ which consists of clopen definable subsets of $X$. We may assume that $k=2$ (by an induction argument) and each $V_{i}$ is bounded.

\vspace{1ex}

The idea is to apply the closedness theorem along with Proposition~\ref{separation} about separation, applied to some closed definable subsets of $K^{n_{1}} \times K^{n_{2}}$.
To this end, we adopt the following notation from~\cite[Chapter~10, \S~1]{Dries-2}:
$$ V_{12} := \phi_{1}(U_{1} \cap U_{2}), \ \ V_{21} := \phi_{2}(U_{2} \cap
   U_{1}), $$
$$ B_{1} := V_{1} \cap \partial V_{12} = \phi_{1}(\partial U_{2}), \ \ B_{2} := V_{2} \cap \partial V_{21} = \phi_{2}(\partial U_{1}), $$
$$ B_{1}^{\prime} := \{ x \in K^{n_{1}}: \ \exists \, y \in B_{2} \, \forall \, \epsilon >0 \, \exists \, z \in U_{1} \cap U_{2} \, : $$
$$ [ \, |x - \phi_{1}(z)| < \epsilon, \ |y - \phi_{2}(z)| < \epsilon \, ] \}, $$
$$ B_{2}^{\prime} := \{ y \in K^{n_{2}}: \ \exists \, x \in B_{1} \, \forall \, \epsilon >0 \, \exists \, z \in U_{1} \cap U_{2} \, : $$
$$ [ \, |x - \phi_{1}(z)| < \epsilon, \ |y - \phi_{2}(z)| < \epsilon \, ] \}; $$
here $\partial \, U_{i} := \overline{U_{i}} \setminus U_{i}$ denotes the frontier of $U_{i}$ and $\overline{U_{i}}$ the topological closure of $U_{i}$ in $X$, $i=1,2$. Obviously, $B_{1}$ and $B_{2}$ are closed subsets of $V_{1}$ and $V_{2}$, respectively. The frontiers $\partial \, U_{1}$ and $\partial \, U_{2}$ are disjoint. Indeed,
$$ \partial U_{1} \cap \partial U_{2} = (\overline{U_{1}} \cap (X \setminus U_{1})) \cap (\overline{U_{2}} \cap (X \setminus U_{2})) \subset $$
$$ (X \setminus U_{1}) \cap (X \setminus U_{2}) = X \setminus (U_{1} \cup U_{2}) = X \setminus X = \emptyset. $$

\vspace{1ex}

Further put $\psi_{i} := \phi_{i}^{-1}: V_{i} \to U_{i}$, $i=1,2$,
$$ W:= \{ (x,y) \in V_{12} \times V_{21}: \ \psi_{1}(x) = \psi_{2}(y) \}
   \subset K^{n_{1}} \times K^{n_{2}}, $$
$$ D_{1} := \overline{W} \cap (B_{1} \times K^{n_{2}}) \ \ \text{and} \ \
   D_{2} := \overline{W} \cap (K^{n_{1}} \times B_{2}). $$
Then
$$ B_{1}^{\prime} = p_{1}(D_{2}) \ \ \text{and} \ \ B_{2}^{\prime} =
   p_{2}(D_{1}), $$
where $p_{1}: K^{n_{1}} \times K^{n_{2}} \to K^{n_{1}}$ and $p_{2}: K^{n_{1}} \times K^{n_{2}} \to K^{n_{2}}$ are the canonical projections. It follows from the closedness theorem (Theorem~\ref{clo-th}) that
$p_{1}(D_{1}) = B_{1}$ and $p_{2}(D_{2}) = B_{2}$.

\vspace{1ex}

Under the assumption of regularity, it has been already shown in~\cite[Chapter~10, Claim~1]{Dries-2} that
$$ V_{1} \cap \overline{B_{1}^{\prime}} = \emptyset = V_{2} \cap
   \overline{B_{2}^{\prime}}. $$
Therefore it is not difficult to deduce that
$$ \overline{D_{1}} \cap D_{2} = \emptyset = D_{1} \cap \overline{D_{2}}, $$
and hence
\begin{equation}\label{inclusion}
D_{2} \subset \overline{D_{2}} \setminus \overline{D_{1}} \ \ \text{and} \ \ D_{1} \subset \overline{D_{1}} \setminus \overline{D_{2}}.
\end{equation}

\vspace{1ex}

It follows from Proposition~\ref{separation}, applied to the closed 0-definable subsets $\overline{D_{1}}$ and $\overline{D_{2}}$, that there exists a closed 0-definable subset $\Omega$ of $K^{n_{1}} \times K^{n_{2}}$ such that
$$ \overline{D_{1}} \subset \Omega, \ \ \ \overline{D_{2}} \setminus \overline{D_{1}} \subset (K^{n_{1}} \times K^{n_{2}}) \setminus \Omega, $$
and $\Omega \setminus (\overline{D_{1}} \cap \overline{D_{2}})$ is a clopen subset of $(K^{n_{1}} \times K^{n_{2}}) \setminus (\overline{D_{1}} \cap \overline{D_{2}})$.
Hence and by inclusions~\ref{inclusion}, we get
\begin{equation}\label{emptyset}
\Omega \cap D_{2} = \emptyset.
\end{equation}

\vspace{1ex}

We shall have established the theorem once we prove the following

\begin{claim}
The sets
$$ C_{1} := (U_{1} \setminus U_{2}) \cup (\psi_{1} \circ p_{1})(W \cap \Omega) $$
and
$$ C_{2} := (U_{2} \setminus U_{1}) \cup (\psi_{2} \circ p_{2})(W \setminus \Omega) $$
are disjoint clopen 0-definable neighborhoods in $X$ of the subsets $\partial U_{2}$ and $\partial U_{1}$, respectively.
\end{claim}

Indeed, then $\{ C_{1}, C_{2} \}$ is a finer, 0-definable clopen covering (even partition) of $X$ we are looking for.

\vspace{1ex}

We turn to the above claim. By symmetry (cf.~Remark~\ref{symmetry}), it is enough to prove that $C_{1}$ is a closed subset of $X$. Obviously, $U_{1} \setminus U_{2}$ is a closed subset of $X$. It remains to show that
$$ F_{1} := (\psi_{1} \circ p_{1})(W \cap \Omega) $$
is a closed subset of $X$. So, given an accumulation point $c \in X$ of $F_{1}$, we must show that $c \in F_{1}$.

\vspace{1ex}

To this end, observe that $F_{1}$ is a closed subset of
$$ U_{1} \cap U_{2} = (\psi_{1} \circ p_{1})(W) $$
by virtue of the closedness theorem. We thus encounter two cases: $c \in \partial U_{2}$ or $c \in \partial U_{1}$. The former is clear since $\partial U_{2} \subset U_{1} \setminus U_{2}$.

\vspace{1ex}

Now we prove by reductio ad absurdum that the latter case is impossible. Again, it follows directly from the closedness theorem that $c = (\psi_{2} \circ p_{2})(a)$ for some point
$$ a \in \overline{W \cap \Omega} \subset \overline{W} \cap \Omega. $$
Hence we get
$$ a \in \Omega \cap \overline{W} \cap (K^{n_{1}} \times B_{2}) = \Omega \cap D_{2} = \emptyset; $$
the last equality is just \ref{emptyset}. This contradiction completes both the proof of the above claim and of Theorem~\ref{embed}.
\end{proof}

We now turn to the theory of definable LC-spaces.

\begin{proposition}\label{prop-reg}
Every definable Hausdorff LC-space $X$ is regular.
\end{proposition}

\begin{proof}
Clearly, the details being left to the reader, we shall have established the proposition if we prove the following lemma, wherein the set $W$ will play the role of an auxiliary neighborhood of a point to be separated from a closed definable subset.

\begin{lemma}
Consider a definable chart
$$ (U_1,\phi_1), \ \ \phi_1: U_1 \to V_{1}, $$
where $V_{1}$ is a locally closed subset of $K^{n_1}$. Let $W$ be a definable subset of $U_1$ such that $Z := \phi_1(W)$ is a closed bounded subset of $K^{n_1}$. Then $W$ is a closed subset of $X$.
\end{lemma}

\begin{proof}
Let $a \in X$ be an accumulation point of $W$, and suppose that $a$ lies in a chart
$$ (U_2,\phi_2), \ \ \phi_2: U_2 \to V_{2}, $$
where $V_{2}$ is a locally closed subset of $K^{n_2}$;
obviously, $a$ is an accumulation point of $W \cap U_{2}$. Since $X$ is a Hausdorff space, the (topological) fiber product
$$ P := Z \times_{X} V_{2} = \{ (v_{1},v_{2}) \in Z \times V_{2}: \ \phi_{1}^{-1}(v_{1}) = \phi_{2}^{-1}(v_{2}) \} \subset Z \times V_{2} $$
is a closed definable subset of $Z \times V_{2}$, and thus of $K^{n_{1}} \times V_{2}$ as well. By the closedness theorem, the canonical projection $\pi (P)$ onto the second factor is a closed subset of $V_{2}$. Hence and because $\phi_{2}(a)$ is an accumulation point of $\pi (P)$ by our initial assumption, there is a point $y \in Z$ such that $(y,\phi_{2}(a)) \in P$. Then $a = \phi_{1}^{-1}(y) \in W$, which is the desired result.
\end{proof}

This completes the proof of the proposition too.
\end{proof}

The result below follows directly from Proposition~\ref{prop-reg}, Theorem~\ref{embed}, Proposition~\ref{clo-bound} and Theorem~\ref{afin-ultranormal}.

\begin{theorem}\label{ultranormal}
Every definable Hausdorff LC-space $X$, and a fortiori every definable manifold, is definably ultranormal.
\hspace*{\fill} $\Box$
\end{theorem}

A Hausdorff space $X$ is said to be {\em definably ultraparacompact} if every finite open definable cover $\{ U_1,\ldots,U_m \}$ can be
refined by a partition into a finitely many clopen definable sets; then, of course, there is a clopen definable cover $\{ \Omega_1,\ldots,\Omega_m \}$ partitioning the space $X$ such that $\Omega_i \subset U_i$ for all $i=1,\ldots,m$.

\vspace{1ex}

By an inductive argument (with respect to the cardinality $m$ of the open definable cover), Theorem~\ref{ultranormal} yields immediately the following

\begin{corollary}\label{ultrapara}
Every definable Hausdorff LC-space is definably ultraparacompact. In particular, so is every definable Hausdorff manifold.
\hspace*{\fill} $\Box$
\end{corollary}

\begin{remark}
In our paper~\cite{Now-Sym}, we gave a definable non-Archimedean version of Bierstone--Milman's desingularization algorithm from~\cite{BM}, which is a process of transforming an analytic function to normal crossings by blowing up along admissible smooth centers. It was done for a strong analytic function on a definably compact strong analytic manifold. The results of this section allow us to achieve the definable version of desingularization algorithm on arbitrary strong analytic manifolds. The proof can be repeated almost verbatim.
Let us recall that strong analyticity, being a model-theoretic strengthening of the weak non-Archimedean concept of analyticity (treated in the classical case e.g., by Serre~\cite{Se}), works well within definable settings, and makes it possible to apply a model-theoretic compactness argument in the absence of the ordinary topological compactness.
\end{remark}


In the recent paper~\cite{Now-closed}, we prove that, in an arbitrary 1-h-minimal structure $K$, every closed definable subset $A$ of $K^{n}$ is the zero locus of a continuous definable function $f:K^{n} \to K$, and is a definable retract of $K^{n}$. The latter yields immediately a definable non-Archimedean analogue of the Tietze--Urysohn extension theorem. Also, in our paper~\cite{Now-Lip}, we establish a theorem on definable Lipschitz extension of maps definable in an arbitrary 1-h-minimal structure. This may be regarded as a definable non-Archimedean version of Kirszbraun's extension theorem.

\vspace{1ex}

To our best knowledge, the only definable, non-Archimedean version of Kirszbraun's theorem was achieved by Cluckers--Martin~\cite{C-M} in the $p$-adic, thus locally compact case; more precisely, for Lipschitz extension of maps which are semi-algebraic, subanalytic or definable in an analytic structure on a finite extension of the field $\mathbb{Q}_{p}$ of $p$-adic numbers. 
The easier case of Lipschitz extension of definable $p$-adic maps on the line $\mathbb{Q}_{p}$ was treated in \cite{Kui}.

\vspace{2ex}

{\bf Acknowledgements.}
The author wishes to express his gratitude to the referees for valuable comments which greatly improved the previous version of the article.

\vspace{1ex}

\vspace{2ex}

\begin{small}
Institute of Mathematics

Faculty of Mathematics and Computer Science

Jagiellonian University

ul.~Profesora S.\ \L{}ojasiewicza 6 

30-348 Krak\'{o}w, Poland

{\em E-mail address: nowak@im.uj.edu.pl}
\end{small}

\end{document}